\RequirePackage{fix-cm}
\documentclass[AMS,Times1COL]{NJDv5}

\articletype{}

\received{Date Month Year}
\revised{Date Month Year}
\accepted{Date Month Year}
\journal{Numer Methods Partial Differential Eq.}
\volume{00}
\copyyear{2024}
\startpage{1}

\raggedbottom
\usepackage{scalefnt}

\usepackage[utf8]{inputenc}
\usepackage{graphicx,xcolor,float}
\usepackage{amsmath,amssymb,bm,pgfplots}
\usepackage[sort,compress]{cite}
\usepackage[capitalise,nameinlink]{cleveref}
\pgfplotsset{compat=1.18}
\renewcommand{\cite}{\citep}
\usepackage{mathtools,enumitem}
\usepackage[utf8]{inputenc}
\usepackage{grffile,tikz,comment}

\def\div{\operatorname{div}}

\def\CC{\mathbf{C}}
\def\E{\mathcal{E}}
\def\G{\mathcal{G}}
\def\T{\mathcal{T}}
\def\R{\mathbb{R}}

\usepackage{multirow}
\def\Th{\mathcal{T}_h}
\def\Itau{\mathcal{I}_\tau}
\def\I{\mathbf{I}}

\def\la{\langle}
\def\ra{\rangle}
\def\lla{\left\langle}
\def\rra{\right\rangle}
\def\Vh{\mathcal{V}_h}

\def\Xh{\mathcal{X}_h}
\newcommand\restr[2]{{
  \left.\kern-\nulldelimiterspace 
  #1 
  \vphantom{\big|} 
  \right|_{#2} 
  }}
  \newcommand\restrb[3]{{
  \left.\kern-\nulldelimiterspace 
  #1 
  \vphantom{\big|} 
  \right|_{#2}^{#3} 
  }}
\newcommand{\dx}{\, \mathrm{d}\mathbf{x}}

\def\div{\operatorname{div}}

\DeclarePairedDelimiter{\norm}{\|}{\|}
\DeclarePairedDelimiter{\snorm}{|}{|}

\def\dt{\partial_t}

\def\dtau{d^{n+1}_\tau}
\def\ds{\,\mathrm{d}s}
\def\u{\mathbf{u}}
\def\vv{\mathbf{v}}

\def\tr#1{{\mathrm{tr}(#1)}}
\def\ddt{\frac{\mathrm{d}}{\mathrm{d}t}}

\def\softd{{\leavevmode\setbox1=\hbox{d}%
		\hbox to 1.05\wd1{d\kern-0.4ex{\char039}\hss}}}

\begin{document}
\scalefont{1.3}

\title{\huge Structure-preserving approximation of the Cahn-Hilliard-Biot system}
\author[1]{Aaron Brunk}
\author[2]{Marvin Fritz}
\authormark{BRUNK \textsc{and} FRITZ}
\titlemark{Structure-preserving approximation of the Cahn-Hilliard-Biot system}
\address[1]{\orgdiv{Institute of Mathematics}, \orgname{Johannes Gutenberg University}, \orgaddress{\state{Mainz}, \country{Germany}}}
\address[2]{\orgdiv{Computational Methods for PDEs}, \orgname{Johann Radon Institute for Computational and Applied Mathematics}, \orgaddress{\state{Linz}, \country{Austria}}}
\def\myalpha{\alpha}
\corres{Aaron Brunk, Institute of Mathematics, Johannes Gutenberg University , Staudinger Weg 9, 55099  Mainz, Germany. Email: abrunk@uni-mainz.de}

\abstract[Abstract]{\large In this work, we propose a structure-preserving discretisation for the recently studied Cahn-Hilliard-Biot system using conforming finite elements in space and problem-adapted explicit-implicit Euler time integration. We prove that the scheme preserves the thermodynamic structure, that is, the balance of mass and volumetric fluid content and the energy dissipation balance. The existence of discrete solutions is established under suitable growth conditions. Furthermore, it is shown that the algorithm can be realised as a splitting method, that is, decoupling the Cahn-Hilliard subsystem from the poro-elasticity subsystem, while the first one is nonlinear and the second subsystem is linear. The schemes are illustrated by numerical examples and a convergence test.}

\keywords{\large Structure-preserving approximation, Cahn-Hilliard-Biot system, continuous finite elements, convex-concave splitting, time average}

\jnlcitation{\cname{%
\author{Brunk A.}, and
\author{Fritz M}}.
\ctitle{Structure-preserving approximation of the Cahn-Hilliard-Biot system.} \cjournal{\it Numer Methods Partial Differential Eq.} \cvol{2024;00(00):00--00}.}
\maketitle
\counterwithin{lemma}{section}
\counterwithin{equation}{section}

\section{Introduction} 
In this paper, we focus on the development and analysis of a structure-preserving discretisation scheme for the Cahn-Hilliard-Biot system. This model, originally derived in \cite{STORVIK2022}, combines the Cahn-Hilliard equation, which describes phase separation and interfacial dynamics, with mechanical deformation and the Biot equations, which govern fluid flow. The system reads as follows:
	\begin{align}
		\dt \phi  &- \div\left( m(\phi ) \nabla \mu \right) = r(\phi, \E(\u), \theta),\\
		&\begin{aligned}[b]
			\! \! \! \mu = -\gamma \Delta \phi + \Psi'(\phi) + W_{\phi} (\phi, \E(\u)) &-M(\phi) (\theta - \alpha(\phi) \div(\u)) \alpha'(\phi) \div(\u)  \\ 
			& +  \frac{M'(\phi)}{2} (\theta - \alpha(\phi)\div(\u))^2,
		\end{aligned}  \label{eq:sys2} \\ 
		 \div(\bm{\sigma}) &= \mathbf{f} , \label{eq:sys3} \\ 
		\bm{\sigma} &= W_{\E} (\phi, \E(\u))  +\CC_{\nu}(\phi)  \E( \dt\u)  - \alpha (\phi) M(\phi) (\theta - \alpha(\phi) \div(\u))\I, \\ 
		\dt\theta &-  \div(\kappa(\phi) \nabla p) = s(\phi, \E(\u), \theta) 	, \label{eq:sys4}\\   
		p& = M(\phi) (\theta - \alpha (\phi) \div(\u)).
	\end{align}
The phase-field $\phi$ represents the two phases $-1$ and $1$, $\mu$ the variational derivative of the energy with respect to $\phi$, $\u$ the displacement of the elastic body, $\theta$ volumetric fluid content, and $p$ the associated pressure. The appearing functions will be specified later on.
The above system has been derived in \cite{STORVIK2022} as a generalised gradient flow of the energy functional
\begin{align} \label{eq:energy}
		\mathcal{F} (\phi, \E(\u), \theta ) &= \int_\Omega \tfrac{\gamma}{2} |\nabla\phi|^2 + \Psi(\phi) \dx 
		+ \int_\Omega W(\phi, \E(\u)) \dx 
		+ \int_\Omega  \tfrac{M(\phi)}{2} (\theta - \alpha(\phi) \div(\u))^2 \dx,
\end{align}   
where $\Omega$ is assumed to be a bounded domain in $\mathbb{R}^d$, $d\in\{2,3\}$, with sufficiently smooth boundary.
The model's ability to account for changes in material properties due to phase transitions makes it particularly relevant for biomedical applications, such as modelling the growth of malignant tumours, which exhibit altered interstitial fluid pressures and changes in the elastic properties of their surrounding matrix \cite{milosevictumor}. These biophysical changes can significantly impact tumour evolution and response to treatments \cite{lima2016, lima2017, tumorstresscheng, tumorstresshelminger, tumorstressstylianopoulos}. As done in tumour evolution equations of phase-field type \cite{garcke2022viscoelastic,garcke2021phase,fritz2023tumor}, we consider $\phi \in [-1,1]$ as the difference in volume fractions, that is, $\{\phi=1\}$ represents unmixed tumour tissue, while $\{\phi=-1\}$ represents surrounding healthy tissue. 

Previous studies have established the well-posedness of the Cahn-Hilliard-Biot model, proving the existence and uniqueness of weak solutions under various assumptions \cite{fritz2023wellposedness,abels2024existence,riethmuller2023wellposedness}. The next critical step is to develop numerical methods that preserve the inherent structures and properties of the model, ensuring stability and accuracy in simulations. Structure-preserving discretisations are numerical schemes designed to maintain key properties of the continuous model, such as energy dissipation, mass conservation, and thermodynamic consistency. These properties are crucial for obtaining physically realistic and stable numerical solutions, especially in long-term simulations of complex systems. Several studies have made significant contributions to structure-preserving discretisations with applications to general dissipative problems \cite{egger2019structure}, coupled Cahn-Hilliard equations \cite{brunk2022second,brunk2023second,brunk2023variational,brunk2024structure,shimura2020error}, Allen-Cahn equations \cite{bottcher2020structure,bendimerad2022structure,lan2023operator}, and poro-elasticity \cite{egger2021structure}. We note that in \cite{storvik2024sequential} a fully implicit scheme for the Cahn-Hilliard-Biot system is proposed. However, the focus was on efficient decoupling and not a structure-preserving approximation.

In this work, we propose and analyse a discrete scheme that preserves the gradient flow structure of the Cahn-Hilliard-Biot system. Our approach builds on recent advances in numerical analysis and aims to address the challenges posed by the coupled nonlinearities. We employ a combination of conforming finite element methods and implicit-explicit (IMEX) Euler time integration, tailored to respect the model's thermodynamic framework. By preserving the discrete energy laws and ensuring consistent approximations of the coupled variables, our methods achieve robust performance and accurate representations of the underlying physical processes.

Numerical methods for the related Cahn-Hilliard-Larché equations, which couple the Cahn-Hilliard equation with elasticity, have seen significant advances in recent years. We mention the fully practical finite element approximation showing stability and convergence in \cite{barrett2006finite}, the experimental observations in superalloys in \cite{blesgen2013cahn}, the finite difference schemes for the 1D system proving error estimates in \cite{shimura2020error,shimura2022new}, the adaptive mesh refinement and multigrid methods for efficient and reliable simulation in \cite{graser2014numerical}, the inclusion of stress-driven interface motion in \cite{garcke2007stress}, the convergence of a finite element and implicit Euler scheme in \cite{garcke2005CHENumerics}, the isogeometric analysis in topology optimisation problems in \cite{dede2012isogeometric}, and the posteriori error analysis in \cite{bartels2010posteriori}.

The structure of this article is as follows: In \Cref{sec:notation}, we introduce the variational formulation and the relevant assumptions of the model. In \Cref{sec:discretisation}, we detail the proposed structure-preserving discretization method and state the main theorem of this work, that is, there is a discrete solution that conserves the mass and energy-dissipation balances. \Cref{sec:proof} provides a rigorous proof of the proposed theorem. Finally, \Cref{sec:simulations} showcases numerical experiments that validate our theoretical findings and demonstrate the practical effectiveness of our methods in simulating tumour growth.

\section{Notation and variational formulation} \label{sec:notation}

Throughout the text, we use standard notation for function spaces and norms; see, for instance, \cite{Wloka}. For ease of notion, we write $\langle \cdot,\cdot \rangle$ for the $L^2(\Omega)$-inner product, $\|\cdot\|_{s,p}$, $s \in \mathbb{R}$, $p \geq 1$, for the norm of the Sobolev space $W^{s,p}(\Omega)$, and shortly $\|\cdot\|_{s}:=\|\cdot\|_{s,2}$.

Using the energy functional $\mathcal{F}$, see \eqref{eq:energy}, we can reveal the gradient structure of the system. Indeed, by recognising $\mu,p$ and partly $\sigma$ as  variational derivatives of $F$, we find:
\begin{align}
 \dt \phi  &- \div\left( m(\phi ) \nabla \mu \right) = r(\phi, \E(\u), \theta), & \mu &= \frac{\delta \mathcal{F}}{\delta\phi}, \label{eq:abs1}\\
 \div(\bm\sigma)&=\frac{\delta \mathcal{F}}{\delta\E(\u)} + \div(\CC_\nu(\phi)\E(\dt\u))  = \mathbf{f}, \label{eq:abs2}\\
 \dt\theta &-  \div(\kappa(\phi) \nabla p) = s(\phi, \E(\u), \theta), & p& = \frac{\delta \mathcal{F}}{\delta\theta}. \label{eq:abs3}
\end{align}

We make the following assumptions and choices: \medskip

\begin{enumerate}[start=0,label=\textup{(A\arabic*)}, ref=A\arabic*, leftmargin=1.5cm] 
\item \label{Ass:Initial} $\Omega \subset \mathbb{R}^d$, $d\in \{2,3\}$, is Lipschitz continuous, and we consider the boundary conditions
\begin{equation*}
    \restr{\u}{\partial\Omega} = \mathbf{0}, \quad \restr{\kappa(\phi)\nabla p\cdot\mathbf{n}}{\partial\Omega}= \restr{\nabla\phi\cdot\mathbf{n}}{\partial\Omega}=\restr{m(\phi)\nabla\mu\cdot\mathbf{n}}{\partial\Omega}=0; 
\end{equation*}
\item \label{Ass:Gamma} the interface parameter $\gamma$ is a positive constant;
\item \label{Ass:Pot} the Cahn-Hilliard potential $\Psi\in C^2(\mathbb{R})$ is bounded from below and admits a decomposition into a convex and concave part $\Psi_{\text{vex}},\Psi_{\text{cav}}$; furthermore, we assume that $|\Psi(x)|\leq c_1|x|^p + c_2$ for $p\leq 6$ and non-negative constants $c_1$, $c_2$;
\item \label{Ass:Elastic} the elastic energy $W$ is given by $W(x,\G)=(\G-\T(x)):\CC(x):(\G-\T(x))$ for $x \in \R$, $\G \in \R^{d\times d}$ with symmetric eigenstrain $\T \in C^{2}(\R)$ and symmetric elastic tensor $\CC \in C^{2}(\R)$  such that
\begin{equation*}
    \G:\CC(x):\G \geq c\snorm{G}^2, \qquad \mathcal{D}:\CC(x):\G = \mathcal{D}:\CC(x)\G =  \CC(x)\mathcal{D}:\G,
\end{equation*}
for any $x \in \R$, $\mathcal{D}, \G \in \R^{d\times d}_{\text{sym}}$; for simplicity, let us assume that $\mathcal{T}(x)=\xi(x-\alpha)\mathbf{I}$ for suitable constants $\xi$, $\alpha$; furthermore, we assume that all derivatives of $\CC$ are uniformly bounded from above and below.
\item \label{Ass:Visco} the modulus of viscoelasticity $\CC_\nu(\phi)$ fulfils the same assumptions as $\CC(\phi)$;
\item \label{Ass:Mob} the diffusion coefficients $m$ and $\kappa$ are positively bounded from above and below, that is, there exist positive constants $m_0,m_1,\kappa_0,\kappa_1$ such that $m_0\leq m(x)\leq m_1$ and $\kappa_0\leq \kappa(x)\leq \kappa_1$ for any $x \in \R$;
\item \label{Ass:Malpha} the functions $M$ and $\alpha$ are positively bounded from above and below, that is, there exist positive constants $M_0,M_1,\alpha_0,\alpha_1$ such that $M_0\leq M(x)\leq M_1$ and $\alpha_0\leq \alpha(x)\leq \alpha_1$ for any $x \in \R$; further, the functions are sufficiently regular with bounded derivatives, that is, $\snorm{\alpha^{(k)}(x)}\leq \alpha_{k+1}$ and $\snorm{M^{(k)}(x)}\leq M_{k+1}$, $k\in\{1,2\}$, for any $x \in \R$;
\item \label{Ass:Force} $\mathbf{f}\in L^2(0,T;H^{-1}(\Omega))$ and the source terms $r,s\in C^1(\mathbb{R} \times \mathbb{R}^{d\times d} \times \mathbb{R})$ are uniformly bounded. \medskip
\end{enumerate}

In the following, we will denote by $F(\phi,\E(\u),\theta)$ the density of the energy functional $\mathcal{F}$. We introduce the reduced energy density, that is, the energy density for the poro-elastic subsystem, via 
\begin{equation}
 \widetilde W(\phi,\E(\u),\theta):= W(\phi,\E(\u)) + \frac{M(\phi)}{2}\big(\theta-\alpha(\phi)\tr{\E(\u)}\big)^2.   \label{eq:tildeW}
\end{equation} 
Note that in view of the above assumptions, we observe that $\widetilde W$ is convex in $(\E(\u),\theta)$ for every fixed $\phi\in\mathbb{R}$. Furthermore, we will denote the partial derivatives of the energy density $F$ by $F_\phi,F_{\nabla\phi},F_{\E(\u)},F_{\theta}$ and similarly for the reduced energy density $\widetilde W$.

We remark that one can also use Dirichlet boundary conditions for $p$ or mixing both sets of conditions by introducing a Dirichlet and a Neumann boundary part, as done in \cite{abels2024existence}. Furthermore, for the analysis, the term $\div(\CC_{\nu}\E(\u))$ can also be replaced by $\div(c_{\nu}\div(\u))$, see \cite{riethmuller2023wellposedness} for existence results which such a regularisation. \smallskip

\noindent\textbf{Variational formulation:}
The above system can be directly converted into a variational formulation, which reads as follows:

\begin{align}
 \la \dt\phi,\psi \ra &+ \la m(\phi)\nabla\mu,\nabla\psi \ra  = \la r(\phi, \E(\u), \theta),\psi\ra, \label{eq:var1}\\
 \la \mu,\xi \ra &= \la F_{\nabla\phi}(\nabla\phi),\nabla\xi \ra + \la F_\phi(\phi, \E(\u), \theta),\xi \ra,  \label{eq:var2}\\
 \la F_{\E(\u)}(\phi, \E(\u), \theta) &+ \CC_\nu(\phi)\E(\dt\u),\E(\vv) \ra = \la \mathbf{f},\vv \ra,  \label{eq:var3}\\
 \la \dt\theta,\chi\ra &+ \la \kappa(\phi)\nabla p,\nabla\chi \ra = \la s(\phi, \E(\u), \theta), \chi\ra,  \label{eq:var4}\\
 \la p,q \ra &= \la F_\theta(\phi, \E(\u), \theta),q \ra,  \label{eq:var5}
\end{align}
where $\psi$, $\xi$, $\vv$, $\chi$, $q$ are suitable test functions.
The energy dissipation law of the system can then be obtained by choosing specific test functions.

\begin{lemma} \label{lem:balance} Sufficiently regular solutions of the system \eqref{eq:abs1}--\eqref{eq:abs3} satisfy the variational formulations \eqref{eq:var1}--\eqref{eq:var5}. Furthermore, the balance of mass and volumetric fluid content as well as the energy-dissipation balance hold, which are given by
  \begin{align*}
   \ddt\la \phi,1 \ra &= \la r(\phi, \E(\u), \theta),1 \ra, \\ \ddt\la \theta,1 \ra &= \la s(\phi, \E(\u), \theta),1 \ra, \\
   \ddt\mathcal{F}(\phi,\E(\u),\theta) &= -\mathcal{D}_\phi(\mu,\dt\u,p)  + \mathcal{P}_{\phi,\E(\u),\theta}(\mu,\dt\u,p),
  \end{align*}  
  with dissipation rate
  \begin{equation*}
  D_\phi(\mu,\dt\u,p):=  \la m(\phi)\nabla\mu,\nabla\mu \ra  + \la \CC_\nu(\phi)\E(\dt\u),\E(\dt\u) \ra +\la \kappa(\phi)\nabla p,\nabla p \ra,
  \end{equation*}
  and source rate
  \begin{equation*}
     \mathcal{P}_{\phi,\E(\u),\theta}(\mu,\dt\u,p) := \la r(\phi, \E(\u), \theta),\mu \ra + \la \mathbf{f},\dt\u \ra + \la s(\phi, \E(\u), \theta),p \ra.  
  \end{equation*}
\end{lemma}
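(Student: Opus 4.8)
The plan is to treat the three assertions separately. For the variational formulation I would test each equation of the strong system \eqref{eq:abs1}--\eqref{eq:abs3} with a smooth test function, integrate over $\Omega$, and shift derivatives by integration by parts, discarding the boundary integrals via the conditions in \eqref{Ass:Initial}. Testing \eqref{eq:abs1} with $\psi$ and moving the divergence onto $\psi$ gives \eqref{eq:var1}, the flux term $\int_{\partial\Omega} m(\phi)\nabla\mu\cdot\mathbf{n}\,\psi$ vanishing. Using $\mu=\frac{\delta\mathcal{F}}{\delta\phi}$, i.e.\ integrating the $-\gamma\Delta\phi$ contribution against $\xi$ by parts with $\nabla\phi\cdot\mathbf{n}=0$, produces \eqref{eq:var2} with $F_{\nabla\phi}(\nabla\phi)=\gamma\nabla\phi$ and $F_\phi$ the collection of pointwise terms in \eqref{eq:sys2}. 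Testing \eqref{eq:abs2} with $\vv$ and integrating by parts yields \eqref{eq:var3}: the boundary contribution drops because admissible $\vv$ inherit the homogeneous Dirichlet condition of $\u$, and the symmetry of $\CC$ and $\CC_\nu$ in \eqref{Ass:Elastic}--\eqref{Ass:Visco} permits replacing $\nabla\vv$ by its symmetric part $\E(\vv)$. Equations \eqref{eq:var4} and \eqref{eq:var5} follow identically, the latter being algebraic since $p=\frac{\delta\mathcal{F}}{\delta\theta}=F_\theta$.

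For the two balances I would insert constant test functions. With $\psi\equiv1$ in \eqref{eq:var1} the mobility term disappears because $\nabla1=0$ and $\la\dt\phi,1\ra=\ddt\la\phi,1\ra$, giving $\ddt\la\phi,1\ra=\la r,1\ra$; the choice $\chi\equiv1$ in \eqref{eq:var4} gives the fluid-content balance in the same way.

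The energy balance is the heart of the matter. I would expand $\ddt\mathcal{F}$ by the chain rule applied to the density $F$,
\[
\ddt\mathcal{F} = \la F_\phi,\dt\phi\ra + \la F_{\nabla\phi},\nabla\dt\phi\ra + \la F_{\E(\u)},\E(\dt\u)\ra + \la F_\theta,\dt\theta\ra,
\]
and then eliminate each group with the conjugate test functions. Testing \eqref{eq:var2} with $\xi=\dt\phi$ collapses the first two summands into $\la\mu,\dt\phi\ra$, which testing \eqref{eq:var1} with $\psi=\mu$ rewrites as $\la r,\mu\ra-\la m(\phi)\nabla\mu,\nabla\mu\ra$. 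Testing \eqref{eq:var3} with $\vv=\dt\u$, admissible since $\dt\u$ still vanishes on $\partial\Omega$, turns the third summand into $\la\mathbf{f},\dt\u\ra-\la\CC_\nu(\phi)\E(\dt\u),\E(\dt\u)\ra$. Finally, testing \eqref{eq:var5} with $q=\dt\theta$ identifies the fourth summand with $\la p,\dt\theta\ra$, and testing \eqref{eq:var4} with $\chi=p$ rewrites it as $\la s,p\ra-\la\kappa(\phi)\nabla p,\nabla p\ra$. Adding the three contributions and regrouping the quadratic terms against the source terms reproduces precisely $-D_\phi(\mu,\dt\u,p)+\mathcal{P}_{\phi,\E(\u),\theta}(\mu,\dt\u,p)$.

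The main obstacle is structural rather than computational: the argument relies entirely on the variational-derivative identifications $\mu=F_\phi$ (modulo its gradient part), the elastic part of $\bsig$ equal to $F_{\E(\u)}$, and $p=F_\theta$, so that the chain-rule expansion of $\ddt\mathcal{F}$ matches term by term the left-hand sides of \eqref{eq:var2}, \eqref{eq:var3} and \eqref{eq:var5}. The most delicate bookkeeping is the $\phi$-derivative, where $F_\phi$ must reproduce the full nonlinear right-hand side of \eqref{eq:sys2}, including the $M'(\phi)$ and $\alpha'(\phi)$ terms obtained by differentiating the coupling energy $\tfrac{M(\phi)}{2}(\theta-\alpha(\phi)\div(\u))^2$ with respect to $\phi$. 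Once this is verified, the sufficient-regularity hypothesis legitimises every integration by parts and the chain rule, and no cancellations beyond those displayed are required.
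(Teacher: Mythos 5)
Your proposal is correct and follows essentially the same route as the paper's proof: the balances via the constant test functions $\psi=\chi=1$, and the energy-dissipation identity by expanding $\ddt\mathcal{F}$ with the chain rule and then inserting exactly the conjugate test functions $\xi=\dt\phi$, $q=\dt\theta$ in \eqref{eq:var2}, \eqref{eq:var5} and $\psi=\mu$, $\vv=\dt\u$, $\chi=p$ in \eqref{eq:var1}, \eqref{eq:var3}, \eqref{eq:var4}. Your additional remarks on the vanishing boundary terms, the symmetry of $\CC_\nu$ allowing $\nabla\vv$ to be replaced by $\E(\vv)$, and the identification of $F_\phi$ with the right-hand side of \eqref{eq:sys2} merely spell out details the paper leaves implicit.
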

\begin{proof}
First, we note that the variational form can be deduced by multiplying the system \eqref{eq:abs1}--\eqref{eq:abs3} with sufficiently regular test functions and applying the usual integration by parts. Regarding the balance of the mass and volumetric fluid content, we insert the test functions $\psi=1$ and $\chi=1$ into the variational forms \eqref{eq:var1} and \eqref{eq:var4}, respectively. In fact, this procedure immediately yields the desired results.
For the energy-dissipation balance, we compute the total derivative of the energy functional $\mathcal{F}$, see \eqref{eq:energy}, 
\begin{align*}
 \ddt \mathcal{F}(\phi, \E(\u), \theta )  &= \lla \frac{\partial F}{\partial\nabla\phi},\nabla\dt\phi \rra + \lla \frac{\partial F}{\partial\phi},\dt\phi \rra  + \lla \frac{\partial F}{\partial\E(\u)} ,\E(\dt\u) \rra + \lla \frac{\partial F}{\partial\theta},\dt\theta \rra. \\
 \intertext{Taking the test functions $\xi=\dt\phi$ and $q=\dt\theta$ in the variational forms \eqref{eq:var2} and \eqref{eq:var5}, respectively, further yields}
 & = \la \mu,\dt\phi \ra + \la F_{\E(\u)} ,\E(\dt\u) \ra + \la p,\dt\theta \ra.
 \intertext{Using the variational formulation \eqref{eq:var1}, \eqref{eq:var3} and \eqref{eq:var4} with the test functions $\psi=\mu$, $\vv=\dt\u$, $\chi=p$ finally yields}
 &= - \la m(\phi)\nabla\mu,\nabla\mu \ra  - \la \CC_\nu(\phi)\E(\dt\u),\E(\dt\u) \ra - \la \kappa(\phi)\nabla p,\nabla p \ra \\
 &\quad + \la r(\phi, \E(\u), \theta),\mu \ra + \la \mathbf{f},\dt\u \ra + \la s(\phi, \E(\u), \theta),p \ra,
\end{align*}
which proves the desired energy-dissipation balance.
 \end{proof}

\section{Structure-preserving discretisation} \label{sec:discretisation}

 As a preparatory step, we introduce the relevant notation and assumptions regarding the discretisation strategy for the Cahn-Hilliard-Biot system.

For spatial discretisation, we require that $\Th$ is a geometrically conforming partition of $\Omega$ into simplices. We denote the space of continuous, piecewise linear functions over $\Th$ and the space with zero Dirichlet data by
\begin{align*}
    \Vh &:= \{v \in H^1(\Omega)\cap C^0(\bar\Omega) : v|_K \in P_1(K) \quad \forall K \in \Th\}, \\ \Xh &:= \{v \in \Vh : v\vert_{\partial\Omega}=0\},
\end{align*}
respectively.
We divide the time interval $[0,T]$ into uniform steps with step size $\tau>0$ and introduce $$\Itau:=\{0=t^0,t^1=\tau,\ldots, t^{n_T}=T\},$$ where $n_T=\tfrac{T}{\tau}$ is the absolute number of time-steps. We denote by $\Pi^1_c(\Itau;X)$ and $\Pi^0(\Itau;X)$ the spaces of continuous piecewise linear and piecewise constant functions on $\Itau$ with values in the space $X$, respectively. By $g^{n+1}$ and $g^n$ we denote the evaluation/approximation of a function $g$ in $\Pi^1_c(\Itau)$ or $\Pi^0(\Itau)$ at $t=\{t^{n+1},t^n\}$, respectively, and write $I_n=(t^n,t^{n+1})$. We introduce the following operators and abbreviations:
\begin{itemize}
    \item The time difference and the discrete time derivative, respectively, are denoted by
\begin{equation*}
	d^{n+1}g := g^{n+1} - g^n, \qquad d^{n+1}_\tau g := \frac{g^{n+1}-g^n}{\tau}.
\end{equation*}
\item The time average of a function $g(\rho)$ for $\rho\in\Pi^1_c(\Itau)$ is given by
\begin{equation*}
 g^{av}(\rho) := \frac{1}{\tau}\int_{I_n} g(\rho(s)) \;\mathrm{d}s.   
\end{equation*}
\item The convex-concave splitting of the Cahn-Hilliard potential $\Psi$ is denoted by
\begin{equation*}
\Psi'(\phi_h^{n+1},\phi_h^n):=\Psi'_{\text{vex}}(\phi_h^{n+1}) + \Psi'_{\text{cav}}(\phi_h^{n}).
\end{equation*}
\end{itemize}


Next, we state the fully discrete scheme of the Cahn-Hilliard-Biot system that we analyse in this work.

\begin{problem}\label{prob:ac2}
	Let the initial data $(\phi_{h,0},\u_{h,0},\theta_{h,0})\in \Vh\times\Xh^{d}\times\Vh$ be given. Find $(\phi_h,\u_h,\theta_h)\in \Pi^1_c(\Itau;\Vh\times\Xh^{d}\times\Vh)$ and $(\mu_{h},p_h)\in \Pi^0(\Itau;\Vh\times\Vh)$ that satisfy the variational system
	\begin{align}
        \la\dtau\phi_h,\psi_h\ra & + \la m(\phi_h^n)\nabla\mu_h^{n+1},\nabla\psi_h \ra = \la r(\phi_h^{n},\E(\u_h^{n+1}),\theta_h^{n+1}),\psi_h \ra, \label{eq:pg1} \\
        \la \mu_h^{n+1}, \xi_h\ra &- \gamma\la \nabla\phi_h^{n+1},\nabla\xi_h \ra -  \la \Psi'(\phi_h^{n+1},\phi_h^n) + \widetilde W_\phi^{av}(\phi_h,\E(\u_h^{n+1}),\theta_h^{n+1}),\xi_h \ra = 0, \label{eq:pg2}\\
        \la \CC_{\nu}(\phi_h^n)\E(&\dtau\u_h),\E(\vv_h)\ra  + \la \widetilde W_{\E(\u)}(\phi_h^n,\E(\u_h^{n+1}),\theta_h^{n+1}) ,\E(\vv_h) \ra= \la \mathbf{f}^{n+1},\vv_h \ra, \label{eq:pg3}\\
        \la \dtau\theta_h ,\chi_h \ra &+ \la \kappa(\phi_h^n)\nabla p_h^{n+1} ,\nabla\chi_h  \ra = \la s(\phi_h^{n},\E(\u_h^{n}),\theta_h^{n}),\chi_h \ra, \label{eq:pg4}\\
        \la p_h^{n+1} ,q_h  \ra &- \la \widetilde W_\theta(\phi_h^n,\E(\u_h^{n+1}),\theta_h^{n+1}),q_h \ra = 0, \label{eq:pg5}
	\end{align}
	for every $(\psi_h,\xi_h,\vv_h,\chi_h,q_h)\in\Vh\times\Vh\times\Xh^d\times\Vh\times\Vh$ and every $0\leq n\leq n_T-1$.
\end{problem}

The properties of this scheme are condensed into the following theorem.

\begin{theorem} \label{thm:scheme}
Let \eqref{Ass:Initial}--\eqref{Ass:Force} hold. Furthermore, let $\tau\leq C_0$ for $C_0>0$ depend solely on the parameters and external forces. Then, for any $(\phi_{h,0},\u_{h,0},\theta_{h,0}) \in \Vh\times\Xh^d\times\Vh$, Problem~\ref{prob:ac2} admits at least one solution $(\phi_h,\mu_h,\u_h,\theta_h,p_h)$. Moreover, any such solution conserves the mass balances and energy-dissipation balance, that is, it holds
  \begin{align}
  \la \phi_h^{n},1 \ra = \la \phi_h^m,1 \ra &+ \tau\sum_{k=m}^{n-1}\la r(\phi_h^k,\E(\u_h^{k+1}),\theta_h^{k+1}),1 \ra , \label{thm1:mass:phi}  \\
  \la \theta_h^{n},1 \ra = \la \theta_h^m,1 \ra &+ \tau\sum_{k=m}^{n-1}\la s(\phi_h^{k},\E(\u_h^{k}),\theta_h^{k}),1 \ra , \label{thm1:mass:theta} \\
   \restrb{\mathcal{F}(\phi_h,\E(\u_h),\theta_h)}{t^m}{t^n} &+\int_{t^m}^{t^n}\mathcal{D}^*(\mu_h,\dt\u_h,p_h) \ds\leq  \int_{t^m}^{t^n} \mathcal{P}^*(\mu_h,\dt\u_h,p_h) \ds. \label{thm1:mass:energy}
 \end{align}
 The discrete dissipation and source rate in the above equation are given by
 \begin{align*}
   \int_{t^m}^{t^n}\mathcal{D}^*(\mu_h,\dt\u_h,p_h) \ds &:= \tau\sum_{k=m}^{n-1}\mathcal{D}_{\phi_h^{k}}(\mu_h^{k+1},d_\tau^{k+1}\u_h,p_h^{k+1}), \\
   \int_{t^m}^{t^n} \mathcal{P}^*(\mu_h,\dt\u_h,p_h) \ds &:= \tau\sum_{k=m}^{n-1} \la r(\phi_h^{k},\E(\u_h^{k+1}),\theta_h^{k+1}),\mu_h^{k+1} \ra \\
   &+ \la \mathbf{f}^{k+1},d_\tau^{k+1}\u_h \ra + \la s(\phi_h^{k},\E(\u_h^{k}),\theta_h^{k}),p_h^{k+1} \ra.
 \end{align*}
 For quasi-uniform meshes and under a CFL-type condition $2C_1\tau\leq h^{2d}$, the solution is unique. Here, $C_1>0$ depends on the parameters, the initial data, and the external forces.
 
\end{theorem}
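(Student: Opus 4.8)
The plan is to treat the three claims separately, re-using the formal computation of \Cref{lem:balance} at the discrete level and escalating in difficulty from the mass balances to existence and uniqueness. The two conservation laws \eqref{thm1:mass:phi} and \eqref{thm1:mass:theta} are immediate: testing \eqref{eq:pg1} with $\psi_h=1$ and \eqref{eq:pg4} with $\chi_h=1$ kills the diffusion terms (as $\nabla 1=0$), leaving $\la\dtau\phi_h,1\ra=\la r(\phi_h^{n},\E(\u_h^{n+1}),\theta_h^{n+1}),1\ra$ and $\la\dtau\theta_h,1\ra=\la s(\phi_h^{n},\E(\u_h^{n}),\theta_h^{n}),1\ra$; multiplying by $\tau$ and summing over $k=m,\dots,n-1$ telescopes the discrete derivatives into the stated identities.

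For the energy-dissipation inequality \eqref{thm1:mass:energy} I would mimic \Cref{lem:balance} and choose $\psi_h=\mu_h^{n+1}$ in \eqref{eq:pg1}, $\xi_h=d^{n+1}\phi_h$ in \eqref{eq:pg2}, $\vv_h=\dtau\u_h$ in \eqref{eq:pg3}, $\chi_h=p_h^{n+1}$ in \eqref{eq:pg4}, and $q_h=d^{n+1}\theta_h$ in \eqref{eq:pg5}, then scale the rate equations by $\tau$ and add. The viscous and diffusive contributions assemble into the dissipation $\tau\mathcal{D}$. The gradient term is handled by the identity $\gamma\la\nabla\phi_h^{n+1},\nabla d^{n+1}\phi_h\ra=\tfrac{\gamma}{2}\big(\|\nabla\phi_h^{n+1}\|_0^2-\|\nabla\phi_h^{n}\|_0^2\big)+\tfrac{\gamma}{2}\|\nabla d^{n+1}\phi_h\|_0^2$, whose last summand is nonnegative and is dropped to create the inequality. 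The convex-concave splitting supplies $\la\Psi'(\phi_h^{n+1},\phi_h^n),d^{n+1}\phi_h\ra\ge\int_\Omega\big(\Psi(\phi_h^{n+1})-\Psi(\phi_h^n)\big)\dx$ through the tangent-line estimate for the convex part $\Psi_{\text{vex}}$ and the concave part $\Psi_{\text{cav}}$. The crucial structural ingredient is the time average: since $\phi_h$ is affine on $I_n$ with $\dot\phi_h=\dtau\phi_h$, one obtains the \emph{exact} chain rule $\la\widetilde W_\phi^{av}(\phi_h,\E(\u_h^{n+1}),\theta_h^{n+1}),d^{n+1}\phi_h\ra=\int_\Omega\big(\widetilde W(\phi_h^{n+1},\E(\u_h^{n+1}),\theta_h^{n+1})-\widetilde W(\phi_h^{n},\E(\u_h^{n+1}),\theta_h^{n+1})\big)\dx$, while convexity of $\widetilde W(\phi_h^n,\cdot,\cdot)$ in $(\E(\u),\theta)$ gives $\la\widetilde W_{\E(\u)},d^{n+1}\E(\u_h)\ra+\la\widetilde W_\theta,d^{n+1}\theta_h\ra\ge\int_\Omega\big(\widetilde W(\phi_h^n,\E(\u_h^{n+1}),\theta_h^{n+1})-\widetilde W(\phi_h^n,\E(\u_h^{n}),\theta_h^{n})\big)\dx$. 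The term $\widetilde W(\phi_h^n,\E(\u_h^{n+1}),\theta_h^{n+1})$ cancels between these two estimates, so the reduced-energy contributions collapse to $\widetilde W(\phi_h^{n+1},\E(\u_h^{n+1}),\theta_h^{n+1})-\widetilde W(\phi_h^{n},\E(\u_h^{n}),\theta_h^{n})$; together with the gradient and potential terms this is exactly $\mathcal{F}(\phi_h^{n+1},\E(\u_h^{n+1}),\theta_h^{n+1})-\mathcal{F}(\phi_h^{n},\E(\u_h^{n}),\theta_h^{n})$, and summation over $k$ yields \eqref{thm1:mass:energy}.

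For existence at a single step—with $(\phi_h^n,\u_h^n,\theta_h^n)$ given—I would recast \eqref{eq:pg1}--\eqref{eq:pg5} as a root-finding problem for a continuous map $\mathcal{R}$ on the finite-dimensional space of unknowns; continuity follows from the regularity and growth hypotheses \eqref{Ass:Pot}, \eqref{Ass:Elastic}, \eqref{Ass:Malpha}, \eqref{Ass:Force}, the point being that, with $\phi_h$ affine on $I_n$, the average $\widetilde W_\phi^{av}$ is an explicit continuous function of $(\phi_h^{n+1},\phi_h^n)$. A standard corollary of Brouwer's fixed-point theorem then delivers a zero, provided $\mathcal{R}$ is coercive, $\mathcal{R}(x)\cdot x>0$ for $\|x\|$ large. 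This coercivity is the one-step energy estimate: testing with the solution gives $\mathcal{F}(\phi_h^{n+1},\E(\u_h^{n+1}),\theta_h^{n+1})+\tau\mathcal{D}\le\mathcal{F}(\phi_h^{n},\E(\u_h^{n}),\theta_h^{n})+\tau\mathcal{P}$; I would then bound the mean of $\phi_h^{n+1}$ by the mass balance and its gradient by the energy (hence $\|\phi_h^{n+1}\|_1$ by Poincaré), recover the mean of $\mu_h^{n+1}$ by testing \eqref{eq:pg2} with constants, and absorb the source rate $\mathcal{P}=\la r,\mu_h^{n+1}\ra+\la\mathbf{f}^{n+1},\dtau\u_h\ra+\la s,p_h^{n+1}\ra$ into $\tfrac12\tau\mathcal{D}$ and $\mathcal{F}^{n+1}$ via the boundedness of $r,s$, the bound on $\mathbf{f}$, and Young's inequality. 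The restriction $\tau\le C_0$ enters precisely here, to keep the source contributions dominated; I expect this coercivity step—closing a uniform a priori bound in the presence of the coupled, time-averaged nonlinearity—to be the main obstacle of the whole argument.

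Finally, for uniqueness I would take two solutions with identical data, subtract \eqref{eq:pg1}--\eqref{eq:pg5}, and test the differences with the natural quantities (the differences of $\mu_h^{n+1}$, $\dtau\u_h$, $p_h^{n+1}$, and with $d^{n+1}$ of the differences of $\phi_h$, $\theta_h$). The leading terms reproduce the coercive dissipation $\mathcal{D}$, while the differences of the nonlinearities $\Psi'_{\text{vex}}$, $\widetilde W_\phi^{av}$, $\widetilde W_{\E(\u)}$, $\widetilde W_\theta$, $r$, $s$ are Lipschitz on bounded sets by \eqref{Ass:Pot}--\eqref{Ass:Malpha}, using the a priori $H^1$-bounds already established. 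To absorb these remainders one trades $L^\infty$ for $L^2$ through inverse estimates on the quasi-uniform mesh, $\|v_h\|_{0,\infty}\lesssim h^{-d/2}\|v_h\|_0$ and $\|\nabla v_h\|_0\lesssim h^{-1}\|v_h\|_0$, which introduce negative powers of $h$; the condition $2C_1\tau\le h^{2d}$ is calibrated so that $\tau$ times the worst such power stays below one, forcing the difference to vanish, with $C_1$ collecting the Lipschitz constants together with the $H^1$- and $L^\infty$-bounds on data and solution. The delicate inverse-inequality bookkeeping—in particular tracking the contribution of the time-averaged term $\widetilde W_\phi^{av}$—is the technically fiddliest ingredient of this final step.
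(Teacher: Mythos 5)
Your treatment of the mass balances and of the energy-dissipation inequality coincides with the paper's proof: the same test functions, the exact chain rule for the time-averaged term $\widetilde W_\phi^{av}$ (valid because $\restr{\dt\phi_h}{I_n}=\dtau\phi_h$ is constant), the tangent estimates for the convex-concave splitting, and convexity of $\widetilde W$ in $(\E(\u),\theta)$ playing the role of the paper's nonnegative Hessian remainder $(B)\leq 0$. Your existence argument is also the paper's in outline (corollary of Brouwer, coercivity of $\la J(x),x\ra$ via the one-step energy estimate, $\tau\leq C_0$ to absorb the sources); the paper controls $\norm{\phi_h^{n+1}}_0$ by augmenting the test vector with $\beta\phi_h^{n+1}$ and invoking a discrete Gronwall lemma rather than your mass-balance-plus-Poincar\'e route, but since $W\geq 0$ and the Biot term is nonnegative, the gradient is controlled by the energy alone and your variant can be made to close.

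The genuine gap is in the uniqueness step. With your pairing — difference of \eqref{eq:pg1} tested with the difference $\mu^{n+1}$, difference of \eqref{eq:pg2} tested with $d^{n+1}\phi=\phi^{n+1}$ — the two identities combine (after the poro-elastic differences are removed) into $\gamma\norm{\nabla\phi^{n+1}}_0^2 + \la \Psi_{vex}''(\zeta)\phi^{n+1},\phi^{n+1}\ra + \tau\la m(\phi^n)\nabla\mu^{n+1},\nabla\mu^{n+1}\ra = -\la \widetilde W^{av}_{\phi\phi}\,\phi^{n+1},\phi^{n+1}\ra$. Observe that the right-hand side carries \emph{no} factor of $\tau$ relative to the coercive term $\gamma\norm{\nabla\phi^{n+1}}_0^2$: after the inverse estimates, $\snorm{\widetilde W_{\phi\phi}}\lesssim h^{-d}\big(\norm{\u^{n+1}}_1^2+\norm{\theta^{n+1}}_0^2+\cdots\big)$, so absorbing it through Poincar\'e for the mean-free difference would require a smallness condition of the type $C\,h^{-d}\leq\gamma$, which fails as $h\to 0$ and into which the time-step never enters. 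There is simply nothing for the CFL condition $2C_1\tau\leq h^{2d}$ to be ``calibrated'' against, and multiplying the whole identity by $\tau$ rescales both sides identically. The device you are missing — and the actual key idea of the paper's uniqueness proof — is the weighted discrete inverse Laplacian: since the difference $\phi^{n+1}$ is mean free (this itself relies on the \emph{explicit} evaluation of $\phi$ in $r$, so that the $r$-differences cancel; the paper remarks that an implicit $r$ breaks the proof), one may test the mass-difference equation with $-\Delta_{h,m}^{-1}\phi^{n+1}$, yielding $\norm{\phi^{n+1}}_{-1,m}^2 + \tau\big(\gamma\norm{\nabla\phi^{n+1}}_0^2 + \la \Psi_{vex}''(\zeta)\phi^{n+1},\phi^{n+1}\ra\big) = -\tau\la \widetilde W^{av}_{\phi\phi}\,\phi^{n+1},\phi^{n+1}\ra$. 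Now the bad term carries a $\tau$ while the left-hand side contains the $\tau$-free quantity $\norm{\phi^{n+1}}_{-1,m}^2$; the interpolation $\norm{v}_0^2\lesssim \norm{v}_{-1,m}\norm{\nabla v}_0$ splits the right side into a piece absorbed by $\tau\gamma\norm{\nabla\phi^{n+1}}_0^2$ and a piece $\tau h^{-2d}C_1\norm{\phi^{n+1}}_{-1,m}^2$ absorbed precisely under the stated CFL condition. Two further points worth recording: the paper first reduces to the Cahn-Hilliard block via \cref{lem:split} — the poro-elastic subsystem depends only on $\phi_h^n$ and is linear, so $(\u^{n+1},\theta^{n+1},p^{n+1})$ coincide for both candidate solutions, whence all elastic differences and the $r$-difference vanish before the estimate above is performed; and the constant $C_1$ is made uniform in $h,\tau$ by invoking the a priori bound \eqref{eq:apriori}, which is why it depends on the initial data and forces.
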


The proof of this result will be considered in the next section. We note that the time-average strategy could also be applied to the whole system. The resulting scheme would preserve the energy-dissipation balance without numerical dissipation, cf.~\cite{BrunkCh} where this strategy was applied to the Cahn-Hilliard equation.

\begin{lemma}\label{lem:split}
The numerical discretisation of the Cahn-Hilliard-Biot system as stated in Problem \ref{prob:ac2} can be realised as a decoupled scheme where the first step is linear, while the second step is, in general, nonlinear. The decoupled scheme reads as follows:
\begin{problem}\label{prob:split}
	Let $(\phi_{h}^n,\u_{h}^n,\theta_{h}^n)\in \Vh\times\Xh^{d}\times\Vh$ be given. 
    \begin{enumerate}
        \item Find $(\u_h^{n+1},\theta_h^{n+1},p_h^{n+1})\in \Xh^d\times\Vh\times\Vh$ such that
	\begin{align}
        \la \CC_{\nu}(\phi_h^n)\E(&\dtau\u_h),\E(\vv_h)\ra  + \la \widetilde W_{\E(\u)}(\phi_h^n,\E(\u_h^{n+1}),\theta_h^{n+1}) ,\E(\vv_h) \ra= \la \mathbf{f}^{n+1},\vv \ra, \label{eq:dis1} \\
        \la \dtau\theta_h ,\chi_h \ra &+ \la \kappa(\phi_h^n)\nabla p_h^{n+1} ,\nabla\chi_h  \ra = \la s(\phi_h^{n},\E(\u_h^{n}),\theta_h^{n}),\chi_h\ra, \label{eq:dis2} \\
        \la p_h^{n+1} ,q_h  \ra &- \la \widetilde W_\theta(\phi_h^n,\E(\u_h^{n+1}),\theta_h^{n+1}),q_h \ra = 0, \label{eq:dis3}
	\end{align}
	holds for $(\vv_h,\chi_h,q_h)\in\Xh^d\times\Vh\times\Vh$.
	\item  Find $(\phi_h^{n+1},\mu_h^{n+1})\in \Vh\times\Vh$ such that
	\begin{align}
        \la\dtau\phi_h,\psi_h\ra & + \la m(\phi_h^n)\nabla\mu_h^{n+1},\nabla\psi_h \ra = \la r(\phi_h^{n},\E(\u_h^{n+1}),\theta_h^{n+1}),\psi_h \ra, \label{eq:dis4} \\
        \la \mu_h^{n+1}, \xi_h\ra &- \gamma\la \nabla\phi_h^{n+1},\nabla\xi_h \ra -  \la \Psi'(\phi_h^{n+1},\phi_h^n),\xi_h \ra \label{eq:dis5} \\
        &- \la \widetilde W^{av}_\phi(\phi_h,\E(\u_h^{n+1}),\theta_h^{n+1}),\xi_h \ra = 0, \notag
	\end{align}
	holds for $(\psi_h,\xi_h)\in\Vh\times\Vh$.
 \end{enumerate}
\end{problem}
\end{lemma}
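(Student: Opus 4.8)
The plan is to establish the claimed decoupling by inspecting the data dependencies in Problem~\ref{prob:ac2} and then verifying that the poro-elastic block is genuinely linear. The crucial observation is that in \eqref{eq:pg3}--\eqref{eq:pg5} the phase field enters \emph{only} through its old value $\phi_h^n$: it appears in the coefficients $\CC_\nu(\phi_h^n)$ and $\kappa(\phi_h^n)$ and, via $\widetilde W_{\E(\u)}$ and $\widetilde W_\theta$, through the frozen arguments $\CC(\phi_h^n)$, $\T(\phi_h^n)$, $M(\phi_h^n)$, $\alpha(\phi_h^n)$, while the source $s$ is evaluated entirely at the old time level. Consequently none of these three equations references the unknowns $\phi_h^{n+1}$ or $\mu_h^{n+1}$, so they form a closed subsystem for $(\u_h^{n+1},\theta_h^{n+1},p_h^{n+1})$, namely Step~1, equations \eqref{eq:dis1}--\eqref{eq:dis3}.

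To see that Step~1 is linear, I would insert the explicit form of the reduced energy density \eqref{eq:tildeW} together with \eqref{Ass:Elastic} and differentiate. Freezing $\phi$ at $\phi_h^n$ one computes
\[
 \widetilde W_{\E(\u)} = 2\,\CC(\phi_h^n):\big(\E(\u_h^{n+1})-\T(\phi_h^n)\big) - M(\phi_h^n)\,\alpha(\phi_h^n)\big(\theta_h^{n+1}-\alpha(\phi_h^n)\tr{\E(\u_h^{n+1})}\big)\I,
\]
\[
 \widetilde W_\theta = M(\phi_h^n)\big(\theta_h^{n+1}-\alpha(\phi_h^n)\tr{\E(\u_h^{n+1})}\big),
\]
which are affine-linear in $(\E(\u_h^{n+1}),\theta_h^{n+1})$; the remaining contributions $\CC_\nu(\phi_h^n)\E(\dtau\u_h)$ and $\kappa(\phi_h^n)\nabla p_h^{n+1}$ are linear, and all right-hand sides are known data. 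Hence \eqref{eq:dis1}--\eqref{eq:dis3} constitute a linear system for $(\u_h^{n+1},\theta_h^{n+1},p_h^{n+1})$, whose solvability is moreover underpinned by the convexity of $\widetilde W$ in $(\E(\u),\theta)$ noted after \eqref{eq:tildeW}.

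Once Step~1 has been solved, $\E(\u_h^{n+1})$ and $\theta_h^{n+1}$ are available as data, so the remaining equations \eqref{eq:pg1}--\eqref{eq:pg2} reduce to the Cahn-Hilliard subsystem \eqref{eq:dis4}--\eqref{eq:dis5} for $(\phi_h^{n+1},\mu_h^{n+1})$; here $\E(\u_h^{n+1})$ and $\theta_h^{n+1}$ appear both in the source $r$ and, together with $\widetilde W^{av}_\phi$, as frozen arguments in the chemical potential. This subsystem is nonlinear in general, owing to the convex-concave split $\Psi'(\phi_h^{n+1},\phi_h^n)$ and to the nonlinear dependence of the time-averaged term $\widetilde W^{av}_\phi(\phi_h,\cdot,\cdot)$ on the endpoint $\phi_h^{n+1}$ through the interpolant $\phi_h$ on $I_n$. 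Finally I would record the equivalence in both directions: the union of the two subsystems is exactly \eqref{eq:pg1}--\eqref{eq:pg5}, and since no equation couples $\phi_h^{n+1}$ back into the poro-elastic block, solving Step~1 followed by Step~2 produces a solution of Problem~\ref{prob:ac2}, and conversely.

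Because the argument is essentially structural, I do not anticipate a genuine obstacle; the only point requiring care is the bookkeeping of which arguments are frozen at $t^n$ versus updated at $t^{n+1}$, in particular ensuring that the elastic stress and the coefficients in \eqref{eq:pg3}--\eqref{eq:pg5} use $\phi_h^n$ rather than $\phi_h^{n+1}$, so that the phase-field update does not feed back into Step~1.
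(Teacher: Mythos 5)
Your proposal is correct and follows essentially the same route as the paper's proof: observe that the poro-elastic block \eqref{eq:pg3}--\eqref{eq:pg5} depends on the phase field only through the frozen value $\phi_h^n$, solve it first, and then feed $(\E(\u_h^{n+1}),\theta_h^{n+1})$ into the Cahn--Hilliard block. The only difference is that where the paper simply appeals to ``the special structure of the energy'' for linearity of $\widetilde W_{\E(\u)}$ and $\widetilde W_\theta$, you verify this explicitly by computing both derivatives from \eqref{eq:tildeW}, which is a correct and welcome elaboration.
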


We note that the above scheme can be used similarly to approximate the Cahn-Hilliard-Larch{\'e} system, that is, in the case $M\equiv 0$. Hence, we also provide a decoupling scheme for the Cahn-Hilliard-Larch{\'e} system.

\begin{proof}
 Careful inspection of equations \eqref{eq:pg1}--\eqref{eq:pg5} reveal that the poro-elastic subsystem  \eqref{eq:pg3}--\eqref{eq:pg5} does not depend on $\phi_h^{n+1}$ but only on $\phi_h^n$. Hence, one can solve this subsystem and use the obtained solutions $\u_h^{n+1}$ and $\theta_h^{n+1}$ to solve the Cahn-Hilliard subsystem \eqref{eq:pg1}--\eqref{eq:pg2}. Note that, due to the special structure of the energy, $\widetilde W_{\E(\u)}(\phi_h^n,\E(\u_h^{n+1}),\theta_h^{n+1})$ and $\widetilde W_{\theta}(\phi_h^n,\E(\u_h^{n+1}),\theta_h^{n+1})$ are linear in $\E(\u_h^{n+1})$ and $\theta_h^{n+1}$.
\end{proof}

    We note that the discretisation of the source terms $r$ and $s$ is chosen as above for specific reasons. First, the explicit choice in $s$ leads to a linear discretisation for the poro-elastic system. Second, the explicit evaluation of $\phi$ in $r$ is chosen to prove the uniqueness of discrete solutions. Indeed, for an implicit evaluation the uniqueness proof does not work, and one has to resort to other approaches, as done in \cite{GarckeTrautwein2022} for a Cahn-Hilliard equation that is coupled to a reaction-diffusion equation.

\section{Proof of Theorem 1}
\label{sec:proof}

In this section, we prove \cref{thm:scheme} which states the existence of a solution to Problem \ref{prob:ac2} (and equivalently Problem \ref{prob:split} by \cref{lem:split}), the conservation of mass and energy-dissipation balance and the uniqueness of the solution for a CFL-type condition. We begin by proving the balance laws \eqref{thm1:mass:phi}--\eqref{thm1:mass:energy}.

\subsection{Balance laws}

Regarding the balance of mass for $\phi_h$, see \eqref{thm1:mass:phi}, we insert the test function $\psi_h=1$ in the equation that governs $\phi_h$ in the decoupled scheme; see \eqref{eq:dis5}. Thus, we obtain
\begin{align*}
\restrb{\la \phi_h, 1\ra}{t^n}{t^{n+1}} = \int_{I_n}\la\dt\phi_h, 1\ra &= \int_{I_n} \la \dtau\phi_h,1 \ra \\
&= - \int_{I_n}\lla m(\phi_h^n)\nabla\mu_h^{n+1},\nabla 1 \rra + \int_{I_n}\lla r(\phi_h^{n},\E(\u_h^{n+1}),\theta_h^{n+1}),1 \rra \\
&= \tau\lla r(\phi_h^{n},\E(\u_h^{n+1}),\theta_h^{n+1}),1 \rra.\end{align*}
Summation over the relevant time-steps yields the result \eqref{thm1:mass:phi}. With the same computations using $\chi_h=1$ in \eqref{eq:dis2}, we obtain balance of volumetric fluid content \eqref{thm1:mass:theta}, that is,
\begin{align*}
\restrb{\la \theta_h, 1\ra}{t^n}{t^{n+1}} &= \int_{I_n} \la \dtau\theta_h,1 \ra =\tau\la s(\phi_h^{n},\E(\u_h^{n}),\theta_h^{n}),1 \ra.
\end{align*}
  Let us now prove the energy-dissipation balance as stated in \eqref{thm1:mass:energy}. We have
 \begin{align*}
\frac{1}{\tau}\restrb{\mathcal{F}(\phi_h,\E(\u_h),\theta_h)}{t^n}{t^{n+1}} & = \gamma\la \nabla\phi^{n+1},\dtau\nabla\phi_h \ra + \la \Psi(\phi_h^{n+1},\phi_h^n),\dtau\phi_h \ra \\
&+ \frac{1}{\tau}\la \widetilde W(\phi_h^{n+1},\E(\u_h^{n+1}),\theta_h^{n+1}) - W(\phi_h^{n},\E(\u_h^{n}),\theta_h^{n}),1 \ra\\
&+ \frac{\tau\gamma}{2}\norm{\nabla\dtau\phi_h}_0^2 + \frac{1}{\tau}\la \Psi(\phi_h^{n+1}) - \Psi(\phi_h^{n}) -\Psi(\phi_h^{n+1},\phi_h^n)d^{n+1}\phi_h , 1\ra \\
&= (a) + (b) + (c) + (d) + (e), 
\end{align*}
where we introduced the short notation $(a)$--$(e)$ for the five terms on the right-hand side. We consider each term and try to simplify them. \medskip

We start with the third inner product called $(c)$ by adding $\pm\la \widetilde W(\phi_h^n,\E(\u_h^{n+1}),\theta_h^{n+1}),1 \ra$ we find
\begin{align*}
 (c) & = \frac{1}{\tau}\la \widetilde W(\phi_h^{n+1},\E(\u_h^{n+1}),\theta_h^{n+1}) - W(\phi_h^{n},\E(\u_h^{n+1}),\theta_h^{n+1}),1 \ra \\ &+ \frac{1}{\tau}\la \widetilde W(\phi_h^{n},\E(\u_h^{n+1}),\theta_h^{n+1}) - W(\phi_h^{n},\E(\u_h^{n}),\theta_h^{n}),1 \ra \\
 & = \frac{1}{\tau}\int_{I_n} \dt \widetilde W(\phi_h,\E(\u_h^{n+1}),\theta_h^{n+1}) \ds  +\frac{1}{\tau} \la \widetilde W(\phi_h^{n},\E(\u_h^{n+1}),\theta_h^{n+1}) - W(\phi_h^{n},\E(\u_h^{n}),\theta_h^{n}),1 \ra.
\end{align*}
Computing the time derivative and adding $\pm\la  \widetilde W_{\E(\u)}(\phi_h^{n},\E(\u_h^{n+1}),\theta_h^{n+1}),\E(\dtau \u_h)\ra$ \linebreak and $\pm\la  \widetilde W_{\theta}(\phi_h^{n},\E(\u_h^{n+1}),\theta_h^{n+1}),\dtau \theta_h\ra$ yields
\begin{align*}
 (c) & = \frac{1}{\tau}\int_{I_n}\la \widetilde W_\phi(\phi_h,\E(\u_h^{n+1}),\theta_h^{n+1}),\dt\phi_h \ra \ds + \la  \widetilde W_{\E(\u)}(\phi_h^{n},\E(\u_h^{n+1}),\theta_h^{n+1}),\E(\dtau \u_h)\ra  \\
 &+ \la  \widetilde W_{\theta}(\phi_h^{n},\E(\u_h^{n+1}),\theta_h^{n+1}),\dtau \theta_h\ra\\
 & + \frac{1}{\tau}\la \widetilde W(\phi_h^{n},\E(\u_h^{n+1}),\theta_h^{n+1}) - \widetilde W(\phi_h^{n},\E(\u_h^{n}),\theta_h^{n}) - \widetilde W_{\E(\u)}(\phi_h^{n},\E(\u_h^{n+1}),\theta_h^{n+1})\E(d^{n+1} \u_h) \\
 &- \widetilde W_{\theta}(\phi_h^{n},\E(\u_h^{n+1}),\theta_h^{n+1}),d^{n+1}\theta_h,1 \ra \\
 & = (i) + (ii) + (iii) + (iv),
\end{align*}
where we introduced the short notation $(i)$--$(iv)$ for the terms on the right-hand side. 

We note that since the time derivative is piecewise-constant on $I_n$ and coincides with the discrete time derivative $\restr{\dt\phi_h}{I_n}=\dtau\phi_h$, we can expand $(i)$ as 
\begin{align*}
 (i) = \lla \frac{1}{\tau}\int_{I_n} \widetilde W_\phi(\phi_h,\E(\u_h^{n+1}),\theta_h^{n+1}) \ds,\dtau\phi_h \rra = \la W^{av}_\phi(\phi_h,\E(\u_h^{n+1}),\theta_h^{n+1}), \dtau\phi_h\ra.
\end{align*}

The addition of $(a)$, $(b)$, $(i)$, $(ii)$, $(iii)$ yields
\begin{align*}
(A) & =  \gamma\la \nabla\phi^{n+1},\dtau\nabla\phi_h \ra + \la \Psi(\phi_h^{n+1},\phi_h^n),\dtau\phi_h \ra + \la W^{av}_\phi(\phi_h,\E(\u_h^{n+1}),\theta_h^{n+1}), \dtau\phi_h\ra  \\
& +\la  \widetilde W_{\E(\u)}(\phi_h^{n},\E(\u_h^{n+1}),\theta_h^{n+1}),\E(\dtau \u_h)\ra  + \la  \widetilde W_{\theta}(\phi_h^{n},\E(\u_h^{n+1}),\theta_h^{n+1}),\dtau \theta_h\ra. 
\end{align*}

The addition of the remaining terms, that is, $(d)$, $(e)$ and $(iv)$, yields
\begin{align*}
 (B) & = \frac{\tau\gamma}{2}\norm{\nabla\dtau\phi_h}_0^2 + \frac{1}{\tau}\la \Psi(\phi_h^{n+1}) - \Psi(\phi_h^{n}) -\Psi(\phi_h^{n+1},\phi_h^n)d^{n+1}\phi_h , 1\ra\\
 & + \frac{1}{\tau}\la \widetilde W(\phi_h^{n},\E(\u_h^{n+1}),\theta_h^{n+1}) - \widetilde W(\phi_h^{n},\E(\u_h^{n}),\theta_h^{n}) - \widetilde W_{\E(\u)}(\phi_h^{n},\E(\u_h^{n+1}),\theta_h^{n+1})\E(d^{n+1} \u_h) \\
 &- \widetilde W_{\theta}(\phi_h^{n},\E(\u_h^{n+1}),\theta_h^{n+1})d^{n+1} \theta_h,1 \ra.
\end{align*}

It remains to expand $(A)$ and $(B$). We start with $(A)$ and using as test functions $\xi_h=\dtau\phi_h\in\Vh$ in \eqref{eq:dis5} and $q_h=\dtau\theta_h\in\Vh$ in \eqref{eq:dis3} yields
\begin{align*}
 (A) &= \la \mu_h^{n+1},\dtau\phi_h \ra + \la  \widetilde W_{\E(\u)}(\phi_h^{n},\E(\u_h^{n+1}),\theta_h^{n+1}),\E(\dtau \u_h)\ra + \la p_h^{n+1},\dtau\theta_h \ra.
 \intertext{Insertion of $\psi_h=\mu_h^{n+1}\in\Vh$ in \eqref{eq:dis4}, $\vv_h=\dtau \u_h\in\Xh^d$ in \eqref{eq:dis1} and $\chi_h=p_h^{n+1}\in\Vh$ in \eqref{eq:dis2} yields}
 & = - \la m(\phi_h)\nabla\mu_h^{n+1},\nabla\mu_h^{n+1} \ra - \la \CC(\phi_h^n)\E(\dtau \u_h),\E(\dtau \u_h)\ra - \la \kappa(\phi_h^n)\nabla p_h^{n+1},\nabla p_h^{n+1}\ra \\
 & + \la r^{n,n+1},\mu_h^{n+1} \ra + \la \mathbf{f},\dtau\u_h \ra + \la s^{n},p_h^{n+1} \ra \\
 & = \mathcal{D}_{\phi_h^n}(\mu_h^{n+1},\dtau \u_h,p_h^{n+1}) + \int_{t^n}^{t^{n+1}}\mathcal{P}^*(\mu_h^{n+1},\dtau\u_h,p_h^{n+1}),
\end{align*}
where we defined the abbreviations $r^{n,n+1}:=r(\phi_h^{n},\E(\u_h^{n+1}),\theta_h^{n+1})$ and $s^n:=s(\phi_h^{n},\E(\u_h^{n}),\theta_h^{n})$. Furthermore, we used the definitions of the dissipation and source rates as introduced in \cref{lem:balance} and \cref{thm:scheme}.

Hence, $(A)$ corresponds to the usual variational structure of the problem and produces the dissipation and source rates. Next, we turn to $(B)$ and by utilising Taylor's expansion, we find 
\begin{align}
  (B) &= -\frac{\tau\gamma}{2}\norm{\nabla\dtau\phi_h}_0^2 - \frac{\tau}{2}\la (\Psi_{vex}''(z_1)-\Psi_{cav}''(z_2))\dtau\phi_h, \dtau\phi_h\ra \label{eq:numdiss}\\
 & - \frac{\tau}{2}\la \mathbf{H}_{\widetilde W,\E(\u),\theta}(\phi_h^n,\mathbf{z}_3)(\E(\dtau\u_h),\dtau\theta_h)^\top,(\E(\dtau\u_h),\dtau\theta_h)^\top \ra, \notag
\end{align}
where $z_1,z_2$ are convex combinations of $\phi_h^n,\phi_h^{n+1}$, while $\mathbf{z}_3$ is a convex combination of $(\E(\u_h^{n+1}),\theta_h^{n+1})^\top$ and $(\E(\u_h^{n}),\theta_h^{n})^\top$. Furthermore, $\mathbf{H}_{\widetilde W,\E(\u),\theta}$ denotes the Hessian of $\widetilde W$ with respect to $\E(\u)$ and $\theta$. We recall that $\widetilde W$, cf.~\eqref{eq:tildeW}, is convex, and thus yields $(B) \leq 0$. This finishes the energy-dissipation balance, see \eqref{thm1:mass:energy}.

\subsection{A priori bounds}
To prove the existence of a solution to Problem \ref{prob:ac2}, we derive a priori bounds from the energy-dissipation balance.

Recalling the energy-dissipation balance \eqref{thm1:mass:energy} and using the assumptions \eqref{Ass:Elastic}--\eqref{Ass:Malpha} on the lower bounds of the system's functions, we find 
\begin{align*}
   \mathcal{F}(\phi_h^{n+1},\E(\u_h^{n+1}),\theta_h^{n+1})&+\tau\left(m_0\norm{\nabla\mu_h^{n+1}}_0^2 + c\norm{\E(\dtau\u_h)}_0^2 + \kappa_0\norm{\nabla p_h^{n+1}}_0^2 \right) \\
   &\leq \mathcal{F}(\phi_h^{n},\E(\u_h^{n}),\theta_h^{n}) + \tau(\la r^{n,n+1},\mu_h^{n+1} \ra + \la \mathbf{f}^{n+1},\E(\dtau\u_h)\ra + \la s^n,p_h^{n+1} \ra ).
 \end{align*}

Next, we estimate a lower bound for $\mathcal{F}(\phi_h^{n+1},\E(\u_h^{n+1}),\theta_h^{n+1})$ and an upper bound for $\mathcal{F}(\phi_h^{n},\u_h^{n},\theta_h^{n})$. First, we find
\begin{align*}
 \mathcal{F}(\phi_h^{n+1},\E(\u_h^{n+1}),\theta_h^{n+1}) &\geq \frac{\gamma}{2}\norm{\nabla\phi_h^{n+1}}_0^2 + C_\u\norm{\u_h^{n+1}}_1^2 - C(\norm{\phi_h^{n+1}}_0^2 +1) \\ &+ \frac{M_0}{2}\norm{\theta_h^{n+1}-\alpha(\phi_h^{n+1})\div(\u_h^{n+1})}_0^2. 
\end{align*}
 Following the computations in \cite{abels2024existence}, we can estimate the right-hand side further to obtain 
\begin{align*}
 \mathcal{F}(\phi_h^{n+1},\E(\u_h^{n+1}),\theta_h^{n+1}) &\geq \frac{\gamma}{2}\norm{\nabla\phi_h^{n+1}}_0^2 + \Big(C_\u-\frac{M_0}{2}\alpha_0^2\big(\frac{1}{\delta_\theta}-1\big)\Big)\norm{\u_h^{n+1}}_1^2 - C(\norm{\phi_h^{n+1}}_0^2 +1) \\
 &+ \frac{M_0}{2}(1-\delta_\theta)\norm{\theta^{n+1}_h-\alpha(\phi_h^{n+1})\div(\u_h^{n+1})}_0^2. 
\end{align*}
Regarding the upper bound of $\mathcal{F}(\phi_h^{n},\E(\u_h^{n}),\theta_h^{n})$, we compute
\begin{align*}
 \mathcal{F}(\phi_h^{n},\E(\u_h^{n}),\theta_h^{n}) \leq  \frac{\gamma}{2}\norm{\nabla\phi_h^{n}}_0^2 + \norm{\Psi(\phi_h^{n})}_{0,1}^2 + C\norm{\u_h^n}_1^2 + C(\norm{\phi_h^n}_0^2 +1) + C\norm{\theta^{n}_h}_0^2. 
 \end{align*}

 Next, we insert $\psi_h=\beta\phi_h^{n+1}$ in \eqref{eq:dis4}, which gives
 \begin{equation*}
  \frac{\beta}{2}\norm{\phi_h^{n+1}}_0^2 + \frac{\beta}{2}\norm{\dtau\phi_h}_0^2 \leq \frac{\beta}{2}\norm{\phi_h^{n}}_0^2 + \tau m_0\norm{\nabla\mu_h^{n+1}}_0\norm{\nabla\phi_h^{n+1}}_0 + \tau\norm{r^{n,n+1}}_0\norm{\phi_h^{n+1}}_0.   
 \end{equation*}

 Combining the above estimates and choosing $\delta_\theta\leq 1-\varepsilon$ and $\beta \geq 2C$, we find
 \begin{equation}\label{Eq:Estimate}\begin{aligned}
  C_a\big(\norm{\phi_h^{n+1}}_1^2 + \norm{\u_h^{n+1}}_1^2 &+ \norm{\theta_h^{n+1}}_0^2\big) +\tau\left(m_0\norm{\nabla\mu_h^{n+1}}_0^2 + c_0\norm{\dtau\u_h}_1^2 + \kappa_0\norm{\nabla p_h^{n+1}}_0^2 \right) \\
  &\leq C_b\big(\norm{\phi_h^{n}}_1^2 + \norm{\u_h^n}_1^2 + \norm{\theta_h^{n}}_0^2\big)  \\
  &+ \tau\big(\la r^{n,n+1},\mu_h^{n+1} \ra + \la \mathbf{f}^{n+1},\dtau\u_h\ra + \la s^n,p_h^{n+1} \ra \big).
 \end{aligned}\end{equation}

It remains to treat the production terms and the external forces on the right-hand side of the inequality \eqref{Eq:Estimate}. First, we estimate the forces as follows:
\begin{align*}
 \la \mathbf{f}^{n+1},\dtau\u_h\ra \leq \norm{\mathbf{f}^{n+1}}_{-1}\norm{\dtau\u_h}_1 \leq \frac{c_0}{2}\norm{\dtau\u_h}_1^2 + C\norm{\mathbf{f}^{n+1}}_{-1}^2, \\
 \la s^n,p_h^{n+1} \ra \leq \norm{s^n}_0\norm{p_h^{n+1}}_0 \leq C\norm{\theta^{n+1}_h}_0^2 + C\norm{\u^{n+1}_h}_1^2   + C\norm{s^n}_0^2.
\end{align*}
Furthermore, the production term in the Cahn-Hilliard equation gives
\begin{align*}
  \la r^{n,n+1},\mu_h^{n+1}\ra &= \la r^{n,n+1},\mu_h^{n+1}-\la \mu_h^{n+1},1 \ra\ra + \la r^{n,n+1},\la \mu_h^{n+1},1 \ra\ra  \\
  &\leq C(\delta)\norm{r^{n,n+1}}_{0}^2 + \delta m_0\norm{\nabla\mu_h^{n+1}}^2 + \delta\la \mu_h^{n+1},1\ra^2.
\end{align*}
Since we assumed that $r$ is bounded, see \eqref{Ass:Force}, it remains to estimate the squared mean value of $\mu_h^{n+1}$ in the last term of the inequality. This is done as follows:
$$\begin{aligned}
 \la \mu_h^{n+1},1 \ra^2 & \leq \norm{\Psi_{vex}'(\phi_h^{n+1})}_{0,1}^2 +\norm{\Psi_{cav}'(\phi_h^{n})}_{0,1}^2 + \norm{\widetilde W_\phi^{av}(\phi_h,\E(\u_h^{n+1}),\theta_h^{n+1})}_{0,1}^2   \\
 & \leq C\big(\norm{\phi_h^{n+1}}_1^2 + \norm{\phi_h^{n}}_1^2\big) + C\big(\norm{\widetilde W_\phi(\phi_h^n,\E(\u_h^{n+1}),\theta_h^{n+1})}_{0,1}^2 \\ &+ \norm{\widetilde W_\phi(\phi_h^{n+1},\E(\u_h^{n+1}),\theta_h^{n+1})}_{0,1}^2\big).
\end{aligned}$$
Next, we estimate the two norms on the right-hand side that involve the derivative of the functional $\widetilde W$ with respect to $\phi$. Using its structure, see \eqref{eq:tildeW}, we find for $\phi_h^*\in\{\phi_h^{n+1},\phi_h^n\}$ that
\begin{align*}
    \norm{\widetilde W_\phi(\phi_h^*,\E(\u_h^{n+1}),\theta_h^{n+1})}^2_{0,1} &\leq \norm{\E(\u_h^{n+1})-\mathcal{T}(\phi_h^*)}_0^2 + \norm{\mathcal{T}'(\phi_h^*)}_0^2 + C\norm{\theta_h^{n+1}}_0^2 + \norm{\u_h^{n+1}}_1^2 \\
    & \leq C\norm{\E(\u_h^{n+1})}_0^2 + C\norm{\phi_h^*}_0^2 + C\norm{\theta_h^{n+1}}_0^2 + \norm{\u_h^{n+1}}_1^2.
\end{align*}

Combining all estimates and inserting them back into \eqref{Eq:Estimate}, we find that
\begin{align*}
  (C_a-C\tau)\big(\norm{\phi_h^{n+1}}_1^2 + \norm{\u_h^{n+1}}_1^2 &+ \norm{\theta_h^{n+1}}_0^2\big) +\tau\left(m_0\norm{\nabla\mu_h^{n+1}}_0^2 + c_0\norm{\dtau\u_h}_1^2 + \kappa_0\norm{\nabla p_h^{n+1}}_0^2 \right) \\
  &\leq (C_b +C\tau)\big(\norm{\phi_h^{n}}_1^2 + \norm{\u_h^n}_1^2 + \norm{\theta_h^{n}}_0^2\big)  \\
  &+ C\tau\big(\norm{r^{n,n+1}}_0^2 + \norm{s^n}_0^2 + \norm{\mathbf{f}^{n+1}}_{-1}^2\big).  
 \end{align*}

At this point, we choose the step size $\tau \leq \frac{C_a}{2C}$, so that we can write the above problem as
\begin{equation*}
y^{n+1} + \tau D^{n+1} \leq Cy^n + \tau g^n,
\end{equation*}
where we have defined 
\begin{align*}
    y^k &:= \norm{\phi_h^{k}}_1^2 + \norm{\u_h^{k}}_1^2 + \norm{\theta^{k}}_0^2, \\
    D^{n+1}&:=m_0\norm{\nabla\mu_h^{n+1}}_0^2 + c_0\norm{\dtau\u_h}_1^2 + \kappa_0\norm{\nabla p_h^{n+1}}_0^2, \\
    g^n&:= C\big(\norm{r^{n,n+1}}_0^2 + \norm{s^n}_0^2 + \norm{\mathbf{f}^{n+1}}_{-1}^2\big).
\end{align*}
We sum over the first $n$ time-steps and apply an immediate consequence of the discrete Gronwall lemma, see \cite[Lemma 2.2]{bartels2015numerical}, which yields
\begin{align}
  \frac{C_a}{2}\big(\norm{\phi_h^{n+1}}_1^2 + \norm{\u_h^{n+1}}_1^2 &+ \norm{\theta^{n+1}}_0^2\big) +\sum_{k=0}^ {n-1}\tau\left(m_0\norm{\nabla\mu_h^{k+1}}_0^2 + c_0\norm{d_\tau^{k+1}\u_h}_1^2 + \kappa_0\norm{\nabla p_h^{k+1}}_0^2 \right) \notag\\
  &\leq C^*(\norm{\phi_h^{0}}_1^2 + \norm{\u^0_h}_1^2 + \norm{\theta^{0}_h}_0^2)  \label{eq:apriori}\\
  &+ \sum_{k=0}^{n-1}\tau C(\norm{r^{k,k+1}}_0^2 + \norm{s^k}_0^2 + \norm{\mathbf{f}^{k+1}}_{-1}^2). \notag 
 \end{align}
Due to assumption \eqref{Ass:Force}, the terms involving $r$, $s$ and $\mathbf{f}$ are uniformly bounded in the respective norms. Thus, we obtain the desired a priori bound.

 \subsection{Existence} \label{Subsec:Ex}
  We consider the $n$-th time-step and assume that $\phi_h^{n-1},\u_h^{n-1}$ and $\theta_h^{n-1}$ are already known. 
After choosing a basis for $\Vh\times\Vh\times\Xh^d\times\Vh\times\Vh$, we rewrite the variational system \eqref{eq:pg1}--\eqref{eq:pg5} as a nonlinear system in the form of $$J(x)=0 \text{ in } \mathbb{R}^{Z},$$ with $Z=\mathrm{dim(\Vh)}^4\times\mathrm{dim}(\Xh)^d$. In fact, writing $\la J(x),x\ra$ is equivalent to testing the corresponding variational identities with $$x=(\mu_h^{n+1}+\beta\phi_h^{n+1},\dtau\phi_h,\dtau\u_h,p_h^{n+1},\dtau\theta_h).$$
As a consequence of the latter, we thus obtain 
\begin{align*}
\la J(x),x\ra &\geq y^{n+1} - Cy^n + \tau D^{n+1} - \tau C.
\end{align*}
Using the same arguments as we have used to derive the a priori bound \eqref{eq:apriori}, we directly obtain $$\la J(x),x\ra \to \infty \text{ for } |x| \to \infty.$$ 
Thus, the existence of a solution follows from a corollary to Brouwer's fixed-point theorem, see \cite[Proposition~2.8]{Zeidler1}.

\subsection{Uniqueness}
We consider one time-step and recall Lemma \ref{lem:split}, that is, the scheme can be formulated as a splitting scheme. For the sake of presentation, we will neglect the space discretisation index $h$ in the remaining part of the proof.

Since we already know that solutions exist, see \cref{Subsec:Ex}, the poro-elastic subsystem has a unique solution as it is linear. Hence, the triple $(\u^{n+1},\theta^{n+1},p^{n+1})$ is uniquely determined for a given triple $(\phi^n,\u^n,\theta^n)$. Regarding the Cahn-Hilliard subsystem, we assume that two different solutions $(\phi_1^{n+1},\mu_1^{n+1})$ and $(\phi_2^{n+1},\mu_2^{n+1})$ exist. We denote their differences by $\phi^{n+1}$ and $\mu^{n+1}$. Taking the difference in the variational forms, it yields the following discrete formulations
\begin{align}
\la\phi^{n+1},\psi\ra & + \tau\la m(\phi_1^n)\nabla\mu^{n+1},\nabla\psi \ra = 0 \label{eq:diffphi}\\
\la \mu^{n+1}, \xi\ra &- \gamma\la \nabla\phi^{n+1},\nabla\xi \ra -  \la \Psi_{vex}''(\zeta)\phi^{n+1},\xi \ra - \la \widetilde W^{av}_{\phi\phi}(\zeta,\E(\u^{n+1}),\theta^{n+1})\phi^{n+1},\xi \ra = 0,\label{eq:diffmu}
\end{align}
which holds for all $(\psi,\xi)\in\Vh\times\Vh$.
Above, $\widetilde W^{av}_{\phi\phi}$ is defined as the difference of $\widetilde W^{av}_{\phi}$ evaluated at the two solution sets.
In addition, we note that $\zeta$ is a convex combination of $\phi_1^{n+1}$ and $\phi_2^{n+1}$.

In the following, we introduce the discrete inverse Laplacian with a mobility function in the spirit of \cite{Barrett1999}. We define the mean free space $\mathcal{G}:=\{v\in (H^1(\Omega))':\la v,1 \ra =0\}$ and the weighted inverse Laplacian $\Delta_{h,m}^{-1}:\mathcal{G}\to\Vh$, which is given by
\begin{align*}
  \la m(\phi_1^n)\nabla(-\Delta_{h,m}^{-1}v),\nabla w \ra = \la v,w \ra.
\end{align*}
This operator defines a discrete $H^{-1}(\Omega)$-norm via
\begin{equation*}
  \norm{v}_{-1,m}^2 := \la \nabla\Delta_{h,m}^{-1}v,\nabla \Delta_{h}^{-1}v\ra = \la m(\phi_1^n)\nabla(-\Delta_{h,m}^{-1}v),\nabla(-\Delta_{h,m}^{-1}v) \ra 
\end{equation*}
and an associated interpolation inequality is given by
\begin{equation*}
  \norm{v}_0^2 = \la v,v \ra = \la m(\phi_1^n)\nabla(-\Delta_{h,m}^{-1}v), \nabla v\ra \leq \norm{\sqrt{m(\phi_1^n)}}_{0,\infty}\norm{v}_{-1,h,m}\norm{\nabla v}_0.  
\end{equation*}

Although $\phi_1^{n+1}$ and $\phi_2^{n+1}$ are not mean free, the difference $\phi^{n+1}$ is mean free by construction; see \eqref{eq:diffphi} with $\psi=1$. Hence, we are allowed to apply the weighted inverse Laplacian on $\phi^{n+1}.$
Using $\psi=-\Delta_{h,m}^{-1}\phi^{n+1}\in\Vh$ and $\xi=\phi^{n+1}\in\Vh$ as test functions in \eqref{eq:diffphi} and \eqref{eq:diffmu}, respectively, which yields
\begin{align}
 \norm{\phi^{n+1}}_{-1,m}^2 &+ \tau\big(\norm{\nabla\phi^{n+1}}_0^2 + \la \Psi_{vex}''(\zeta)\phi^{n+1},\phi^{n+1} \ra\big) \label{eq:uniqgron}\\
 &= - \tau\la \widetilde W^{av}_{\phi\phi}(\zeta,\E(\u^{n+1}),\theta^{n+1})\phi^{n+1},\phi^{n+1} \ra \notag
\end{align}

It remains to estimate the right-hand side in a suitable manner. First, we observe that we can split $\widetilde W^{av}_{\phi\phi}$ on the right-hand side into two terms using the convex nature of $\zeta$ as follows
\begin{align*}
 \big|\widetilde W^{av}_{\phi\phi}(\zeta,\E(\u^{n+1}),\theta^{n+1})\big| &\leq C\big|W_{\phi\phi}(\zeta^{n+1},\E(\u^{n+1}),\theta^{n+1})\big| + C\big| W_{\phi\phi}(\zeta^{n},\E(\u^{n+1}),\theta^{n+1})\big| \\
 &=:(i)+(ii).
\end{align*}

Before proceeding with the estimates of the second term (ii) on the right-hand side of the above inequality, we note that we have assumed in  \cref{thm:scheme} that $\Th$ is quasi-uniform, that is, we can further resort to inverse inequalities, see \cite[Theorem 4.5.11]{BrennerScott}, of the following form
\begin{align} \label{eq:inverse}
    \|v_h\|_{H^1} \le c_\text{inv} h^{-1} \|v_h\|_{L^2}, \\
    \|v_h\|_{L^p} \le c_\text{inv} h^{d/p-d/q} \|v_h\|_{L^q}, 
\end{align}
which hold for all discrete functions $v_h \in \Vh$ and all $1 \le q \le p \le \infty$. We note that the same estimates hold as well for functions in $\Xh$ and hence in $\Xh^d$.

With the above preparations, we estimate
\begin{align*}
  (i) &\leq \tau\la |W_{\phi\phi}|,(\phi^{n+1})^2 \ra \\ &\leq \tau C\la |\E(\u^{n+1})|^2 + |\zeta^{n+1}|^2 + |\theta^{n+1}|^2 +1,(\phi^{n+1})^2 \ra \\
  &\leq \tau C(\norm{\u^{n+1}}^2_{1,\infty} + \norm{\phi^{n+1}}^2_{0,\infty} + \norm{\theta^{n+1}}^2_{0,\infty})\norm{\phi^{n+1}}^2_0 \\
  &\leq \tau C(\norm{\u^{n+1}}^4_{1,\infty} + \norm{\phi^{n+1}}^4_{0,\infty}+ \norm{\theta^{n+1}}^4_{0,\infty})\norm{\phi^{n+1}}^2_{-1,m} + \frac{\gamma\tau}{4}\norm{\nabla\phi^{n+1}}_0^2\\
&\leq \tau h^{-2d}C(\norm{\u^{n+1}}^4_{1} + \norm{\phi^{n+1}}^4_{0} + \norm{\theta^{n+1}}^4_{0})\norm{\phi^{n+1}}^2_{-1,m} + \frac{\gamma\tau}{4}\norm{\nabla\phi^{n+1}}_0^2 \\
  & \leq \tau h^{-2d}C_1\norm{\phi^{n+1}}^2_{-1,m} + \frac{\gamma\tau}{4}\norm{\phi^{n+1}}_0^2,
\end{align*}
where we used the inverse inequality and the a priori estimate \eqref{eq:apriori} in the last step to bound the term $$\norm{\u^{n+1}}^4_{1}+\norm{\phi^{n+1}}^4_{0}+\norm{\theta^{n+1}}^4_{0}$$ uniformly in $h$ and $\tau$. Using the assumed CFL-like condition $\tau \leq \frac{h^{2d}}{2C_1}$, we find that
\begin{align*}
 (i) \leq  \tfrac{1}{2}\norm{\phi^{n+1}}^2_{-1,m} + \tau\frac{\gamma}{2}\norm{\phi^{n+1}}_0^2.   
\end{align*}
The term $(ii)$ can be estimated in the same manner. Finally, inserting this estimate back into the inequality \eqref{eq:uniqgron}, we obtain
\begin{align*}
 \frac{1}{2}\norm{\phi^{n+1}}_{-1,m}^2 + \tau\big(\tfrac{\gamma}{2}\norm{\nabla\phi^{n+1}}_0^2 &+ \la \Psi_{vex}''(\zeta)\phi^{n+1},\phi^{n+1}\ra \big) \leq 0.
\end{align*}
Hence, it holds $\phi^{n+1}=0$ everywhere. It is straightforward to see that this also implies $\mu^{n+1}=0$ everywhere, which yields a contradiction and implies the uniqueness result, as stated in \cref{thm:scheme}. This finishes the proof.

\section{Numerical illustrations} \label{sec:simulations}

In this section, we illustrate the behaviour of the scheme, see Problem~\ref{prob:split}, by numerical experiments similar to those shown in \cite{fritz2023wellposedness,STORVIK2022}. First, we consider a convergence test showing the first-order convergence in space.  Afterwards, we compare the Cahn-Hilliard-Biot model to the established Cahn-Hilliard-Larch\'e equations when considering typical examples in tumour growth models, see \cite{fritz2023tumor}.
The scheme is implemented in the high-performance multiphysics finite element software NGSolve \cite{schoberl2014c++}. 

The nonlinear system is treated with a Newton method with tolerance $10^{-10}$. We note that it is generally not possible to compute the time averages exactly. Hence, we employ a sufficiently high quadrature rule, that is,
\begin{align*}
g^{av}(\rho) := \frac{1}{\tau}\int_{t^n}^{t^{n+1}} g(\rho(s)) \;\mathrm{d}s \approx \frac{1}{\tau}\sum_{i=1}^{M} \omega_i g(\rho(t_i)).   
\end{align*}
We point out that piecewise linear functions can be parameterised via the new and old time-steps as $\phi_h$ is piecewise linear in time. In the case where $g$ is a polynomial in $\phi_h$, these can be done exactly, which is the case here. As already assumed in \eqref{Ass:Pot} for the analysis, we use the convex-concave splitting of the nonlinear potential $\Psi=\Psi_{cav}+\Psi_{vex}$ into its expansive $\Psi_{cav}$ and contractive part $\Psi_{vex}$, see the initial work \cite{eyre1998} where this splitting was used to prove the unconditional gradient stability of the Cahn-Hilliard equation. In the case of $\Psi(\phi)=\frac14(1-\phi^2)^2$, we set
$\Psi_{vex}'(\phi)=\phi^3$ and $\Psi_{cav}'(\phi)=\phi$.
We treat the expansive part explicitly and the contractive part implicitly. 

The material parameters can be found in Table~\ref{tab:1}. We note that the tensors are written in Voigt notation in two spatial dimensions. The parameter set is taken from \cite{STORVIK2022}. Further, the permeability $\kappa$, compressibility $M$, Biot-Willlis coefficient $\alpha$ and elasticity tensor $\mathbb{C}$ and the consolidation tensor $\mathbb{C}_\nu$ depend on the phase-field through the interpolation function $\pi$ as follows: $$\begin{aligned} \kappa(\phi) &= \kappa_{-1} + \pi(\phi)(\kappa_1-\kappa_{-1}), \\ M(\phi) &= M_{-1} + \pi(\phi)(M_1-M_{-1}), \\ \alpha(\phi) &= \alpha_{-1} + \pi(\phi)(\alpha_1-\alpha_{-1}), \\ \mathbb{C}(\phi) &= \mathbb{C}_{-1} + \pi(\phi)(\mathbb{C}_1 -\mathbb{C}_{-1}), \\ \mathbb{C}_\nu(\phi) &= \mathbb{C}_{\nu,-1} + \pi(\phi)(\mathbb{C}_{\nu,1} -\mathbb{C}_{\nu,-1}).
\end{aligned}$$
Specifically, we choose the interpolation function
\begin{equation*}
    \pi(\phi) = 
    \begin{cases} 
    0,\quad &\phi<-1\\
    \frac14(2+3\phi-\phi^3), \quad &\phi\in [-1,1]\\
    1,\quad &\phi>1
    \end{cases},
\end{equation*}
as originally chosen in \cite{garcke2005CHENumerics} for the simulation of the Cahn-Hilliard-Larch{\'e} equation. In addition, we choose the eigenstrain as $\mathcal{T}(\phi)=\zeta\phi\mathbf{I}$ with $\zeta$ given in Table~\ref{tab:1}.

\begin{table}[htbp!]
\centering
\caption{Table of simulation parameters.}
\begin{tabular}{c|c|c|c|c}
 Symbol & Value & & Symbol & Value\\
\hline
 $m$ & 1 & &  $\alpha_{-1}$, $\alpha_1$ & 1, 0.5 \\
 $\gamma$ & $10^{-4}$ & & $\kappa_{-1}$, $\kappa_{1}$& 1, 0.1 \\
$\xi$ & 0.3  & &  $M_{-1}, M_1$ & 1, 0.1\\
 $\CC_{-1}$ &$\begin{pmatrix}  4 & 2 & 0 \\  2 & 4 & 0 \\  0 & 0 & 8\end{pmatrix}$ & &
 $\CC_{1}$ &$\begin{pmatrix}  1 & 0.5 & 0 \\  0.5 & 1 & 0 \\ 0 & 0 & 2\end{pmatrix} $ \\
 & & & & \\
 $\CC_{\nu,-1}$ &$\begin{pmatrix}  1 & 0.5 & 0 \\  0.5 & 1 & 0 \\ 0 & 0 & 2\end{pmatrix}$ & &
$\CC_{\nu,1}$  &$\begin{pmatrix}  1 & 0.5 & 0 \\  0.5 & 1 & 0 \\ 0 & 0 & 2\end{pmatrix} $
\end{tabular}
\label{tab:1}
\end{table}

\subsection{Convergence test}

We consider the error in space at the final time $T=0.01$ with a respective time-step size of $\tau = 10^{-5}$ by comparing the numerical solutions $(\phi_{h},\mu_{h},\u_{h},\theta_{h},p_{h})$ with those computed on uniformly refined grids, $(\phi_{h/2},\mu_{h/2},\u_{h/2},\theta_{h/2},p_{h/2})$ as no analytical solution to the Cahn-Hilliard-Biot system is available.
The error quantities for the fully discrete scheme are defined by
\begin{align*}
	\hspace{-1em}e_{h} &= \norm*{\phi_{h} - \phi_{h/2}}_{H^1}^2  + \norm*{\E(\u_{h}) - \E(\u_{h/2})}_{L^2}^2  + \norm*{\theta_{h} - \theta_{h/2}}_{L^2}^2\\
    &+  \norm*{\mu_{h} - \mu_{h/2}}_{H^1}^2 + \norm*{p_{h} - p_{h/2}}_{H^1}^2
\end{align*}
as well as the separated errors $e^\phi_{h}, e^\mu_{h}, e^{\u}_{h}$ and $e^{p}_{h}$, which denote the related single quantities from the second error norm, e.g.,  $e^\phi_{h}=\norm*{\phi_{h} - \phi_{h/2}}_{H^1}^2$.

\begin{experiment}
We assume a smooth set of initial data given by
\begin{align*}
 \phi_0 &=-0.1 + 0.01\sin(2\pi x)\sin(2\pi y),\quad \u_0=\mathbf{0}, \quad\theta_0=0, 
\end{align*}
and the source function $r=s=0$, $\mathbf{f}=\mathbf{0}$.
\end{experiment}

For the spatial step sizes $h_k=2^{-k}$ with $k\in \{2,\ldots,6\}$ of the domain $\Omega=(0,1)^2$ we obtain the error rates as illustrated in Table \ref{tab:err}. We observe second-order convergence in the squared norms, which implies the expected first-order convergence in space.
\begin{table}[htbp!]
	\centering
	\small
	\caption{Errors and experimental orders of convergence (eoc).} 
 \label{tab:err}
	\begin{tabular}{c||c|c|c|c|c|c|c|c|c|c}
		$ k $ & $e_{h}$ & eoc & $e^\phi_{h}$  & eoc & $e^\mu_{h}$ & eoc & $e^\u_{h}$ & eoc & $e^p_{h}$ & eoc  \\
		\hline
		$ 2 $ & $  1.23\cdot 10^{-3}$ & --  & $1.20\cdot10^{-3}$ & --  & $3.72\cdot 10^{-5}$ & --  & $6.80\cdot 10^{-8}$  & --  & $1.01\cdot 10^{-6}$  &   --    \\
		$ 3 $ & $  6.27\cdot 10^{-4}$ & 0.98 & $6.02\cdot10^{-4}$ & 0.99 & $2.19\cdot 10^{-5}$ & 0.77 & $3.05\cdot 10^{-8}$  & 1.16 & $5.23\cdot 10^{-7}$  &   0.95   \\
		$ 4 $ & $  2.67\cdot 10^{-4}$ & 1.23 & $2.57\cdot10^{-4}$ & 1.23 & $7.13\cdot 10^{-6}$ & 1.62 & $7.86\cdot 10^{-9}$  & 1.96 & $1.50\cdot 10^{-7}$  &   1.80   \\
		$ 5 $ & $  6.64\cdot 10^{-5}$ & 2.00 & $6.36\cdot10^{-5}$ & 2.02 & $1.79\cdot 10^{-6}$ & 1.99 & $2.01\cdot 10^{-9}$  & 1.97 & $3.75\cdot 10^{-8}$  &   2.00  \\
        $ 6 $ & $  1.60\cdot 10^{-5}$ & 2.05 & $1.51\cdot10^{-5}$ & 2.07 & $4.39\cdot 10^{-7}$ & 2.03 & $5.07\cdot 10^{-10}$ & 1.99 & $9.26\cdot 10^{-9}$  &   2.02   
	\end{tabular}
\end{table}

\subsection{Validation experiments}

\begin{experiment} \label{exp:lshape}
We consider an initial condition with three bubbles of phase $\phi=1$ while the rest of the domain $\Omega=(0,1)^2$ is given by $\phi=-1$ together with zero initial displacement and zero volumetric fluid content. Thus, we assume
\begin{align*}
 \phi_0 &= 2 - \tanh\left(\frac{\mathcal{B}(0.3,0.3,0.15)}{0.005}\right) - \tanh\left(\frac{\mathcal{B}(0.3,0.7,0.15)}{0.005}\right)  - \tanh\left(\frac{\mathcal{B}(0.7,0.3,0.15)}{0.005}\right), \\
 \u_0&=\mathbf{0}, \qquad \theta_0=0, \qquad \mathcal{B}(x_0,y_0,r):= (x-x_0)^2+ (y-y_0)^2 - r^2
\end{align*}
with the source functions $r=s=0$, $\mathbf{f}=\mathbf{0}$.
\end{experiment}

We depict the result in Figure \ref{pic:Lshape}. One can clearly see that after a short time period at $t\approx 0.02$, the three bubbles merge and begin to agglomerate. This agglomeration process contrasts with the usual Cahn-Hilliard dynamics subjected to elastic contribution and results in the L shape, which expands over time, that is, at $t\approx 0.06$ and finally at $t\approx 2$. Note that this specific L shape is due to the special choice of the eigenstrain $\mathcal{T}=\zeta\phi\mathbf{I}$, which implies that the stress-free state is located at $\phi=0$, that is, at the interface. Hence, the system tries to expose as much interface as possible given the contrary Cahn-Hilliard dynamics.

\begin{figure}[htbp!]
\centering
\begin{tikzpicture}
    \draw (0.03, 0) node[inner sep=0] {\includegraphics[width=.705\textwidth]{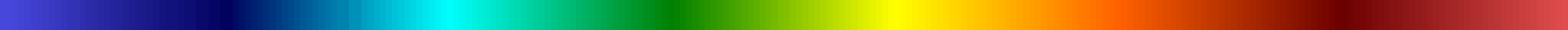}};
    \draw (-6, 0.3) node {-1};
    \draw (0.03, 0.3) node {0};
    \draw (6.1, 0.3) node {1};
\end{tikzpicture} \\[.1cm]\includegraphics[width=.35\textwidth]{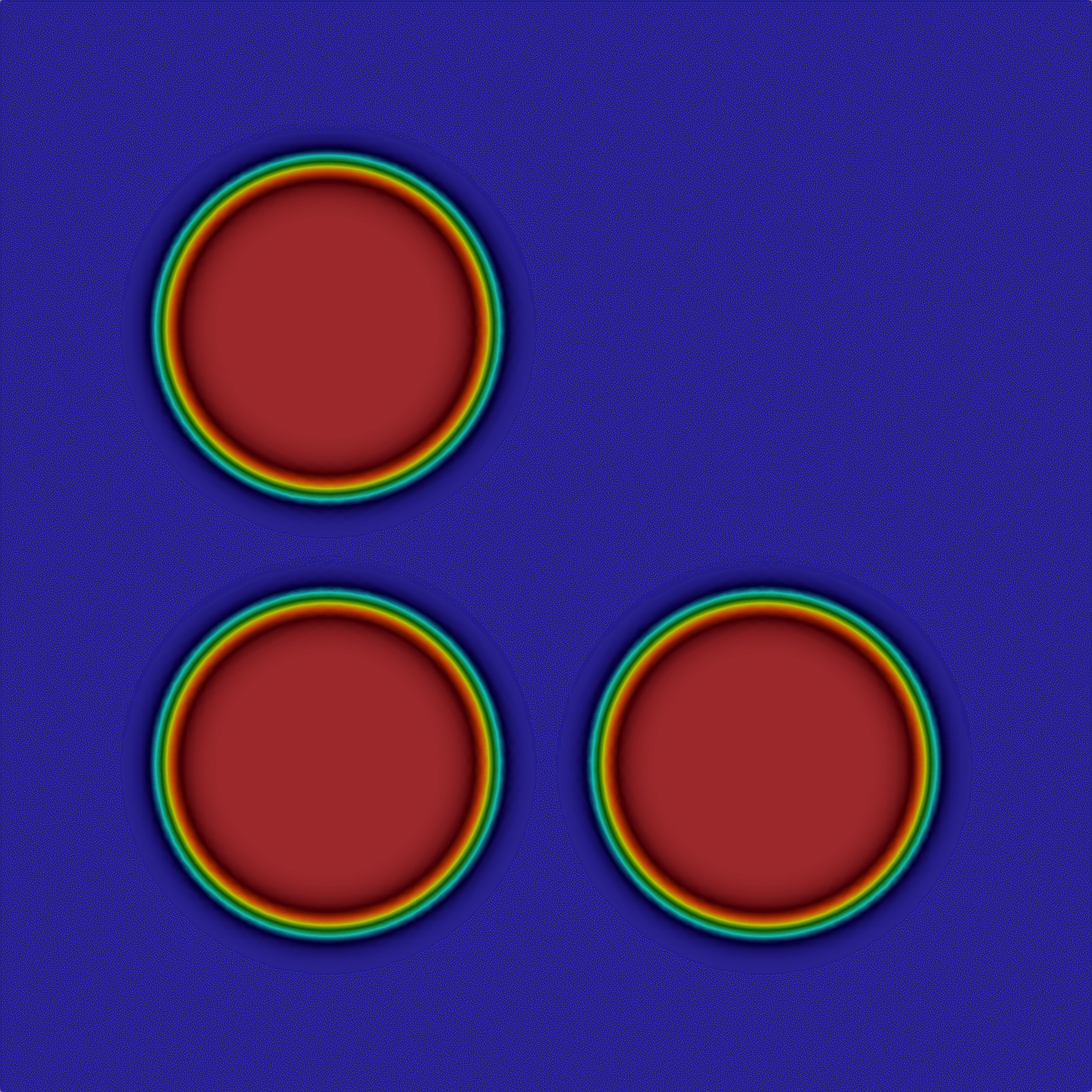} \includegraphics[width=.35\textwidth]{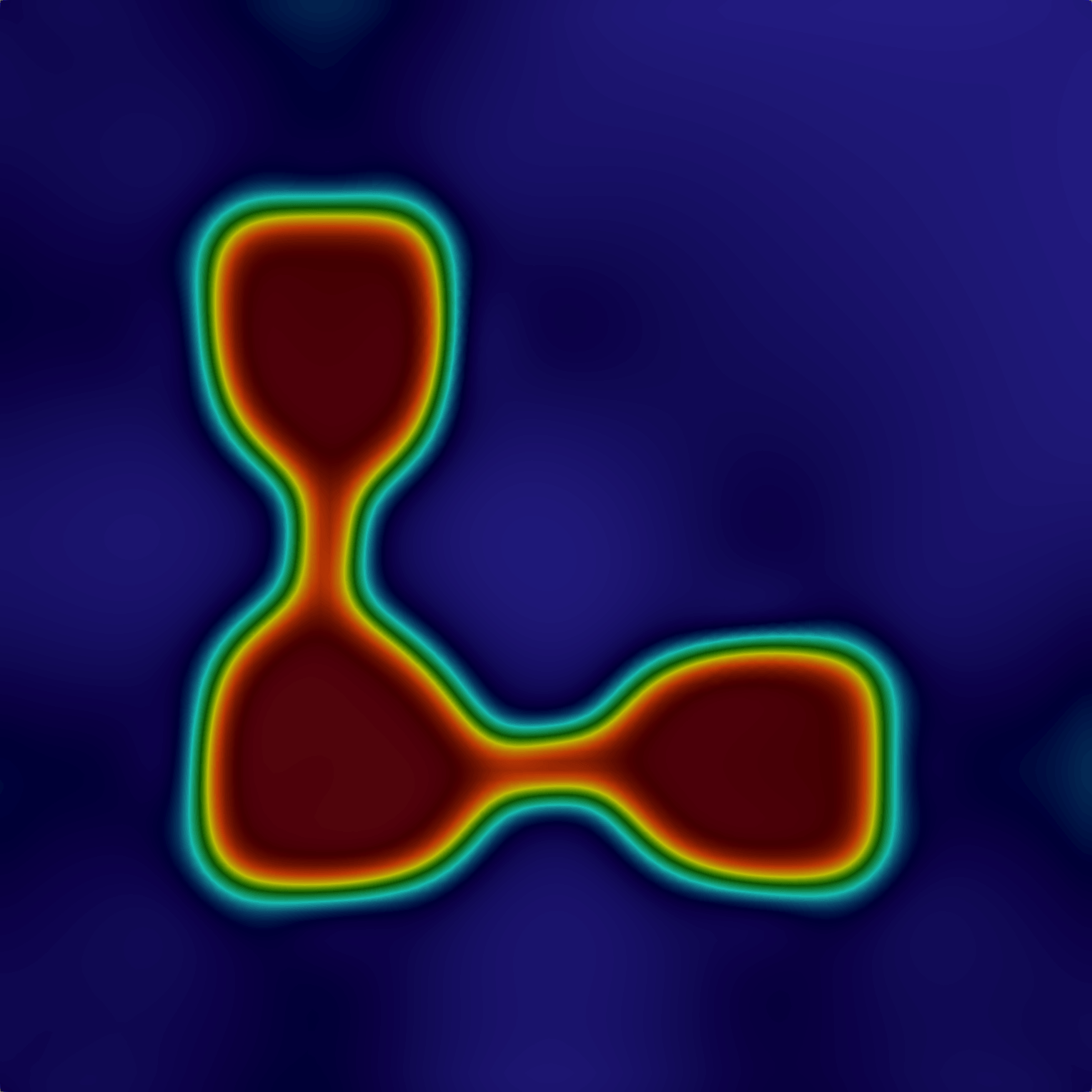} \\[.1cm]
\includegraphics[width=.35\textwidth]{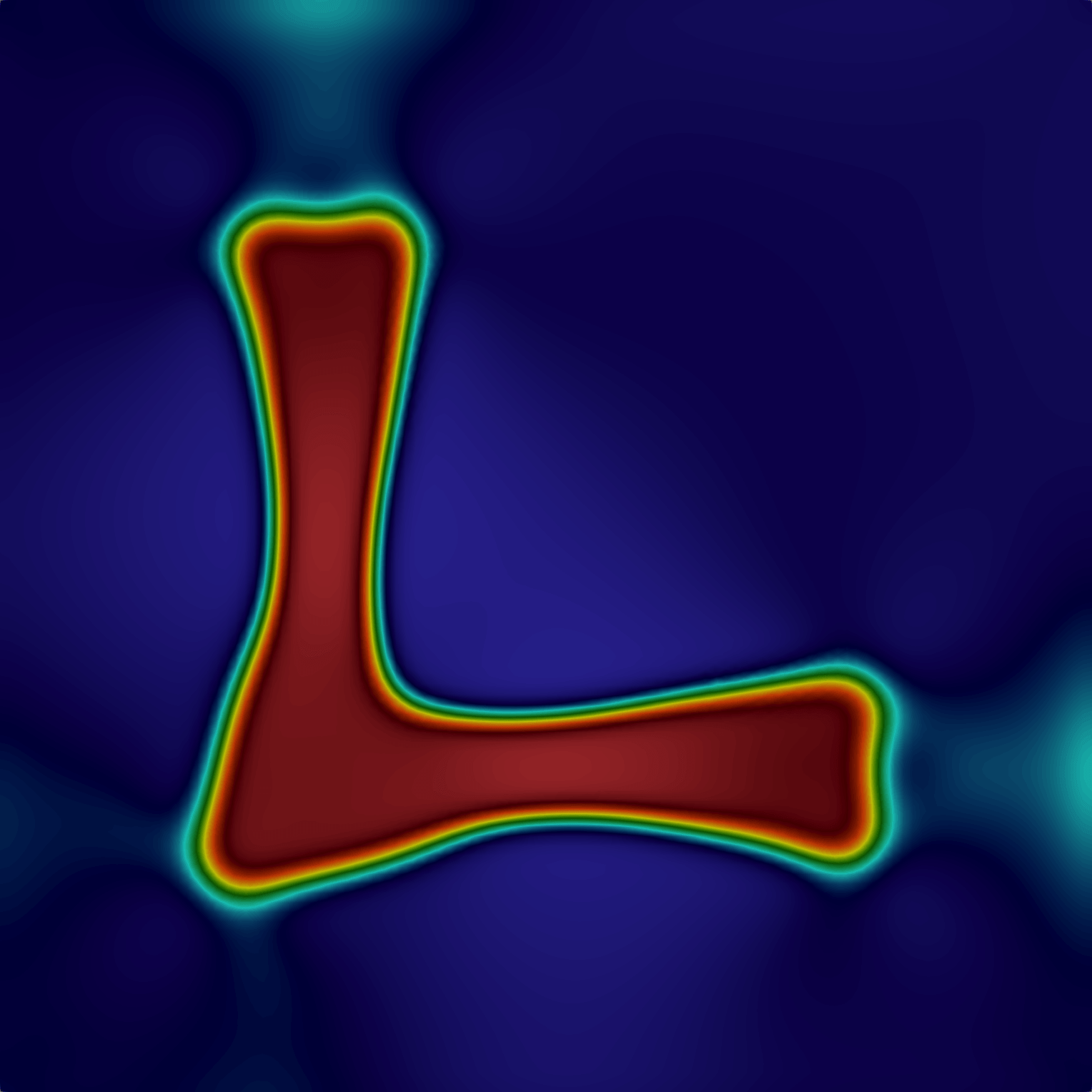} \includegraphics[width=.35\textwidth]{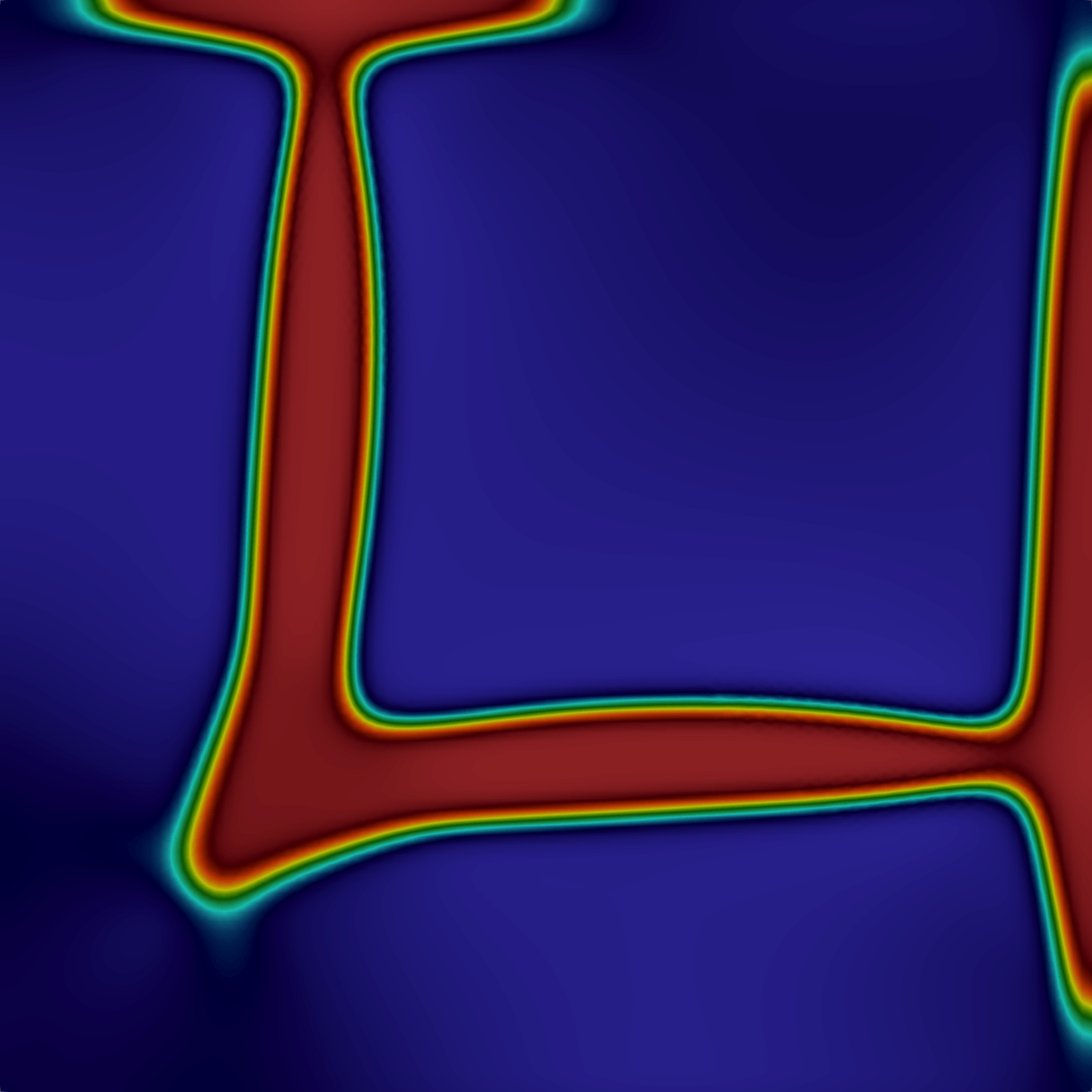} 
\caption{Experiment \ref{exp:lshape}: Snapshots of the phase field $\phi$ at the times $t\in\{0,0.02,0.06,2\}$ with mesh size $h_{\max} \approx 10^{-2}$ and time-step $\tau=10^{-3}$ (from top left to bottom right). \label{pic:Lshape}}
\end{figure}

The next experiment is related to tumour growth. In this case, we compare the Cahn-Hilliard-Biot system (CHB) with the Cahn-Hilliard-Larch{\'e} system (CHL), which is achieved by assuming compressibility $M=0$. We recall that $\phi$ describes the difference in volume fractions, that is, $\{\phi=1\}$ represents unmixed tumour tissue, while $\{\phi=-1\}$ represents surrounding healthy tissue. 
\begin{experiment}\label{exp:tumor}
We consider a single bubble (here the tumour) parametrized by
\begin{align*}
 \phi_0 &= - \tanh\left(\frac{\mathcal{B}(0.5,0.5,0.15)}{0.005}\right), \qquad\u_0=\mathbf{0}, \qquad \theta_0=0.    
\end{align*}
Furthermore, we choose the logistic growth function $r(\phi)=\frac{5}{2}(1-\phi^2)$, $s=\mathbf{f}=0$ and mobility $m(\phi)=10^{-14} + \frac{1}{16}(\phi^2-1)^2.$ Additionally, the following parameters are changed in comparison to Table \ref{tab:1}
$$\begin{aligned}
 \mathbb{C}_{\nu} = 0, \quad \mathcal{T}(\phi)= \frac{1}{2}\zeta(\phi+1)\I, \quad \CC_{-1}=\begin{pmatrix}  6 & 4 & 0 \\  4 & 6 & 0 \\  0 & 0 & 1\end{pmatrix}, \quad \CC_{1}=\begin{pmatrix}  1.55 & 0.38 & 0 \\  0.38 & 1.55 & 0 \\  0 & 0 & 0.58\end{pmatrix}.
\end{aligned}$$
\end{experiment}

Snapshots of these experiments are given in Figures \ref{pic:tumorCHB}--\ref{pic:tumorDiff}. In Figure \ref{pic:tumorCHB} we show the evolution of the CHB system, which appears to be very similar to the evolution of the CHL system in Figure \ref{pic:tumorCHL}. To highlight the difference, we also compute the difference between both solutions, that is, $\snorm{\phi_{\text{CHB}}-\phi_{\text{CHL}}}$ and plot it in Figure~\ref{pic:tumorDiff}.  From this, we see that both models agree very well apart from the interface, while on the interface we observe errors up to the order $10^{-3}$. Since both systems start from the same data and the elastic/poro-elastic system is initialised with zero and $s=0$, this suggests that the CHB system exhibits a distinct feature at the interface.  This new interface behaviour warrants further detailed investigation.

\begin{figure}[htbp!]
\centering
\begin{tikzpicture}
    \draw (0.03, 0) node[inner sep=0] {\includegraphics[width=.705\textwidth]{pics/colornew3.png}};
    \draw (-6, 0.3) node {-1};
    \draw (0.03, 0.3) node {0};
    \draw (6.1, 0.3) node {1};
\end{tikzpicture} \\[.1cm]
\includegraphics[width=.35\textwidth]{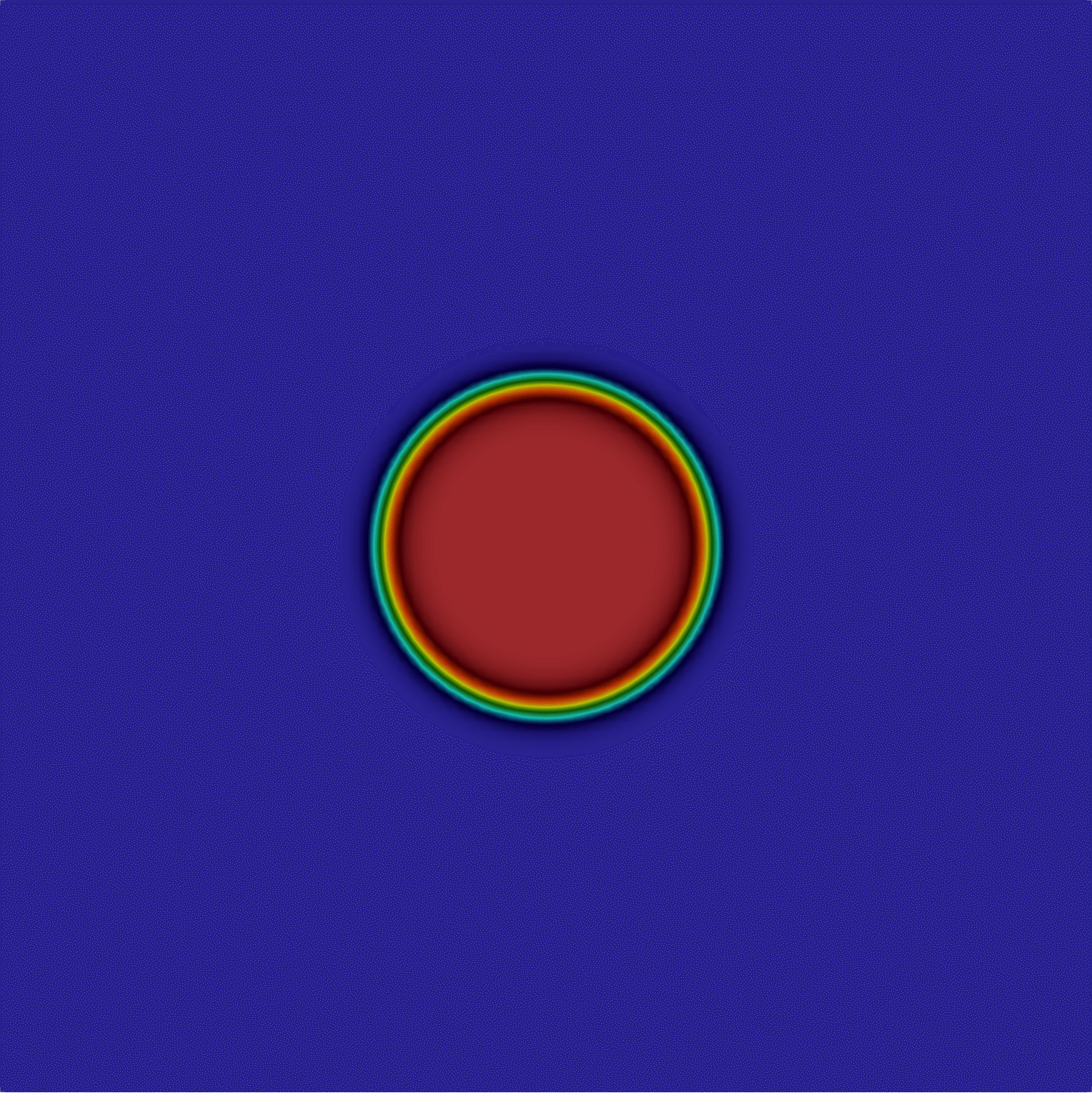}
\includegraphics[width=.35\textwidth]{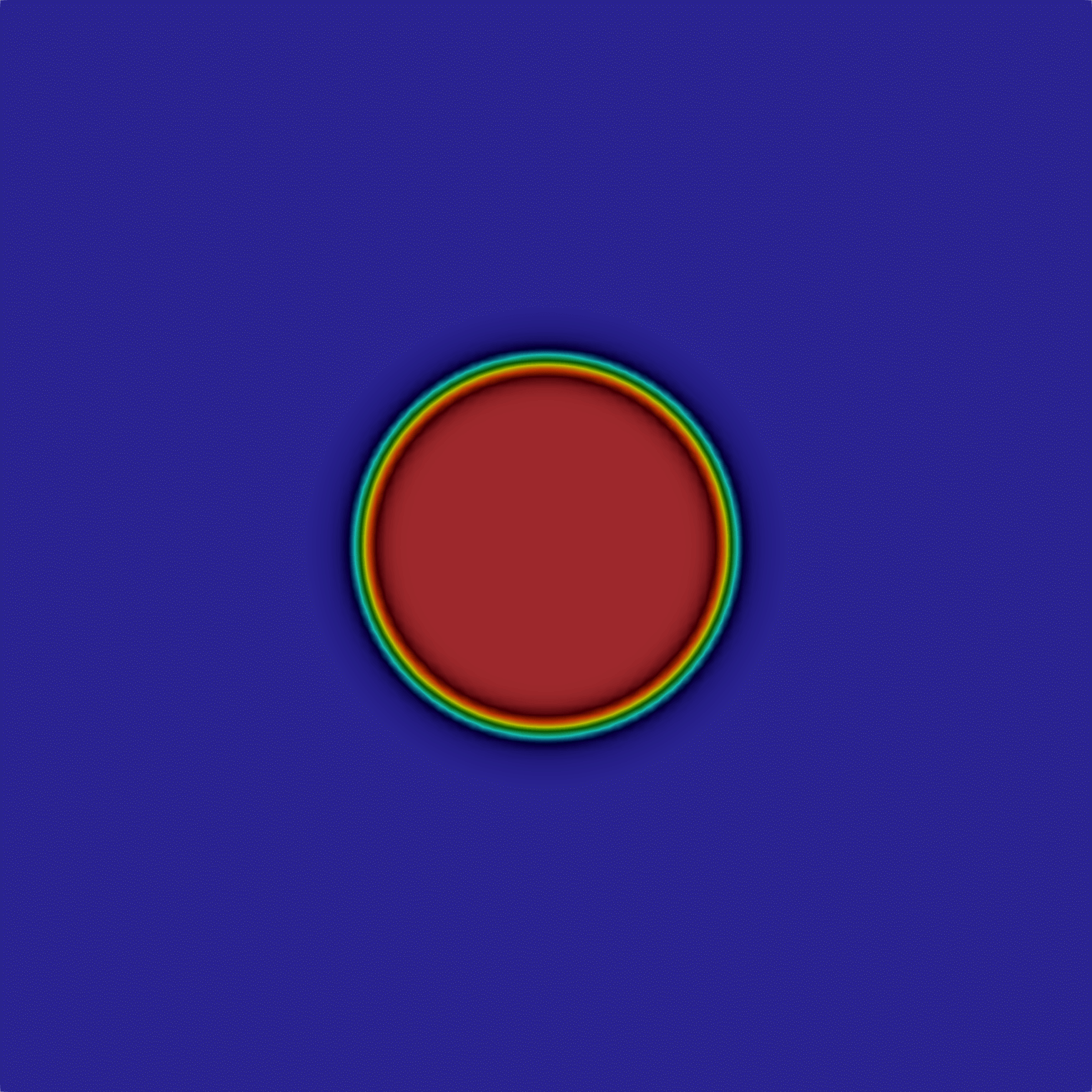}\\[.1cm]
\includegraphics[width=.35\textwidth]{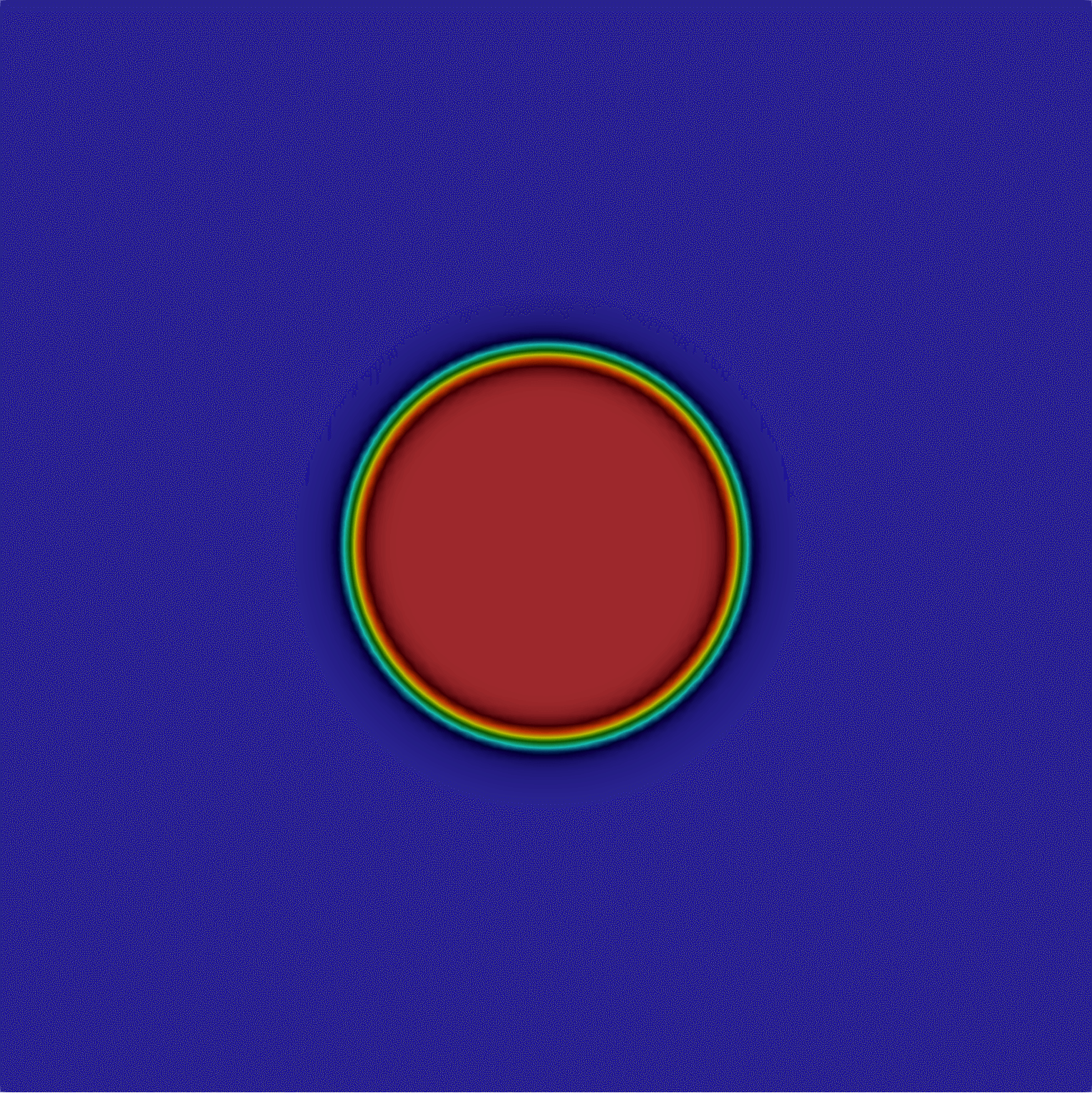}
\includegraphics[width=.35\textwidth]{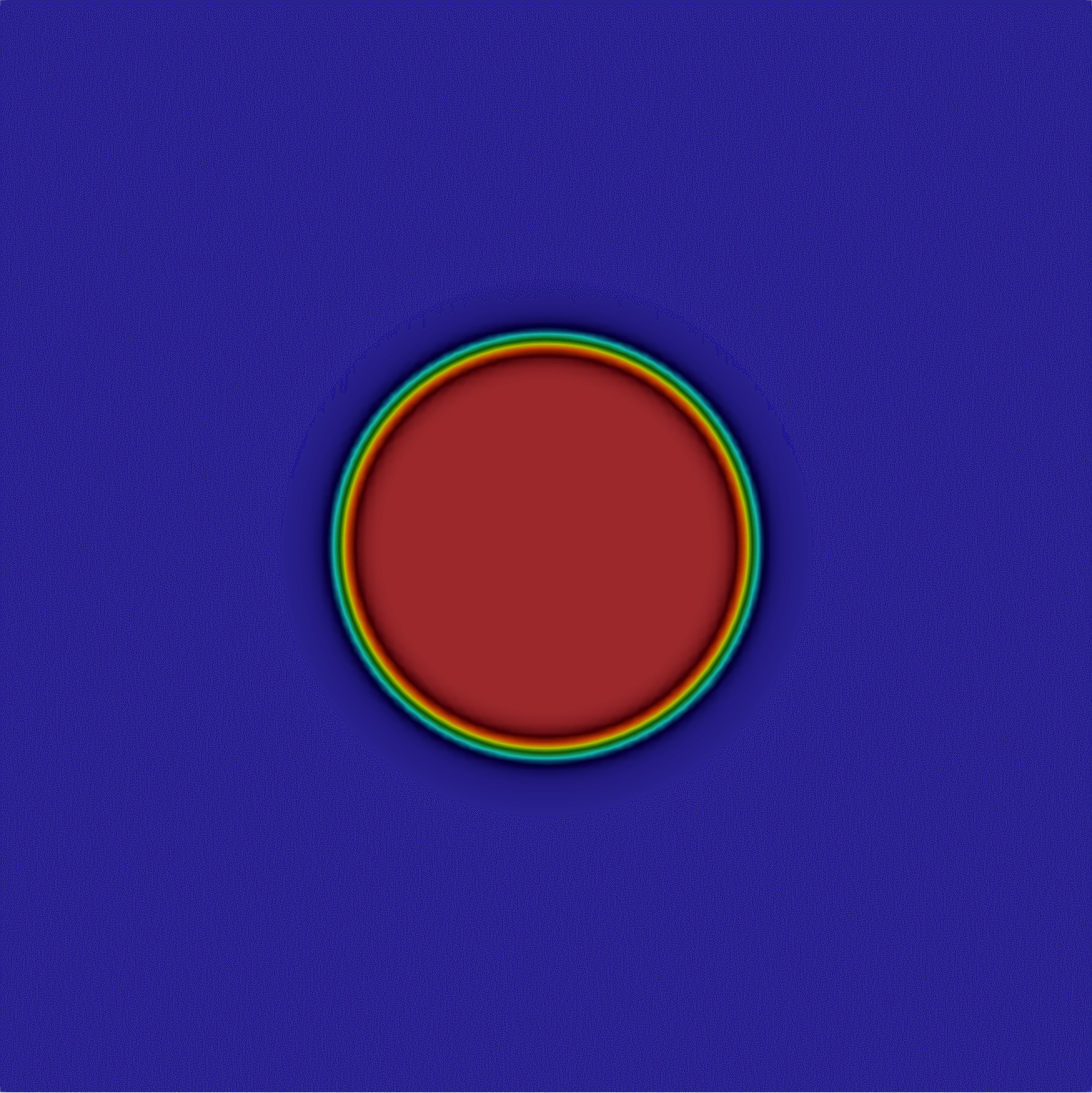}
\caption{Experiment \ref{exp:tumor} for the CHB system: Snapshots of the phase field $\phi$ at the times $t\in\{0,0.5,0.75,1\}$  with mesh size $h_{\max} \approx 10^{-2}$ and time-step $\tau=10^{-3}$ (from top left to bottom right).  \label{pic:tumorCHB}}
\end{figure}

\begin{figure}[htbp!]
\centering
\begin{tikzpicture}
    \draw (0.03, 0) node[inner sep=0] {\includegraphics[width=.705\textwidth]{pics/colornew3.png}};
    \draw (-6, 0.3) node {-1};
    \draw (0.03, 0.3) node {0};
    \draw (6.1, 0.3) node {1};
\end{tikzpicture} \\[.1cm]\includegraphics[width=.35\textwidth]{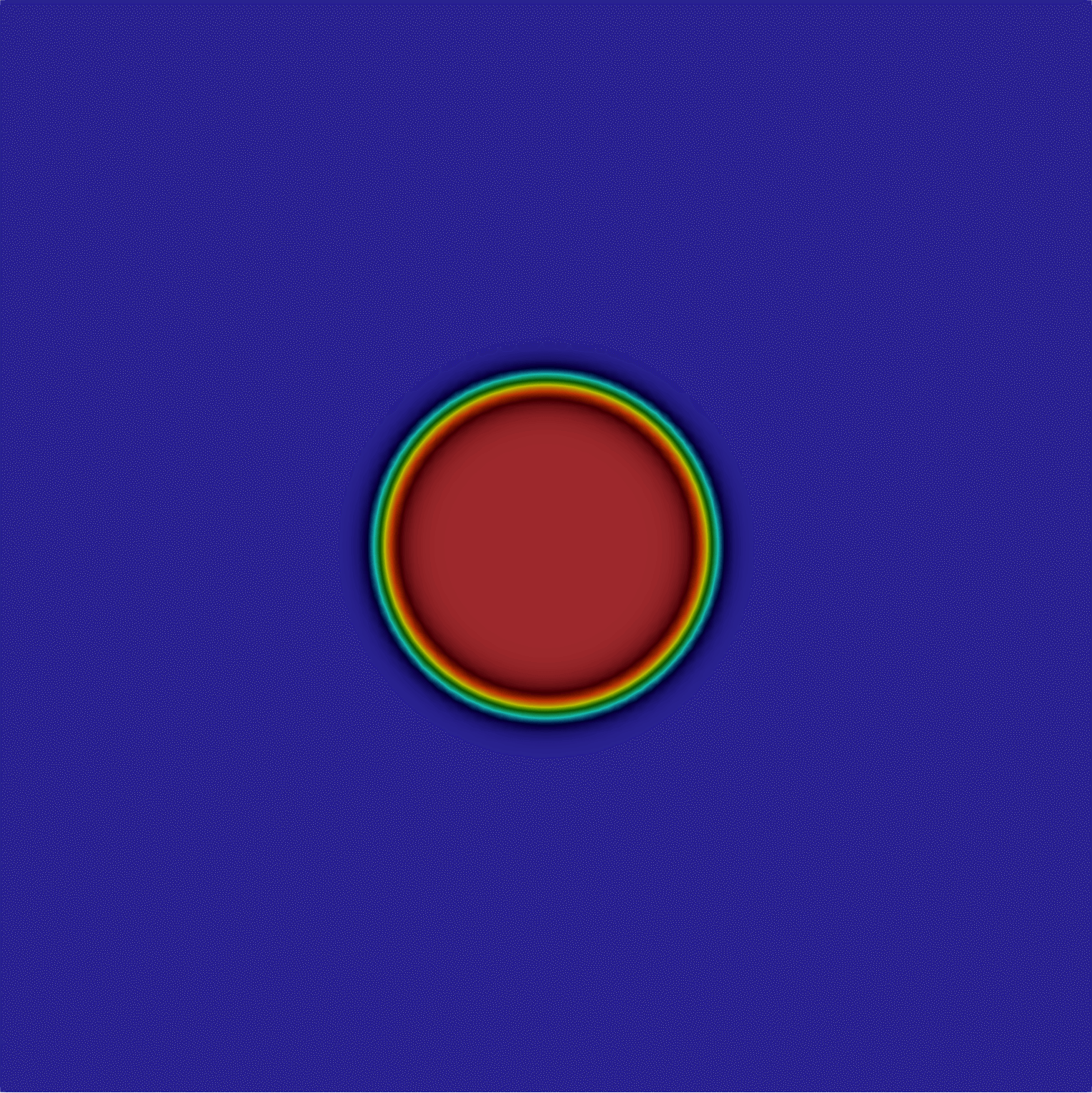} \includegraphics[width=.35\textwidth]{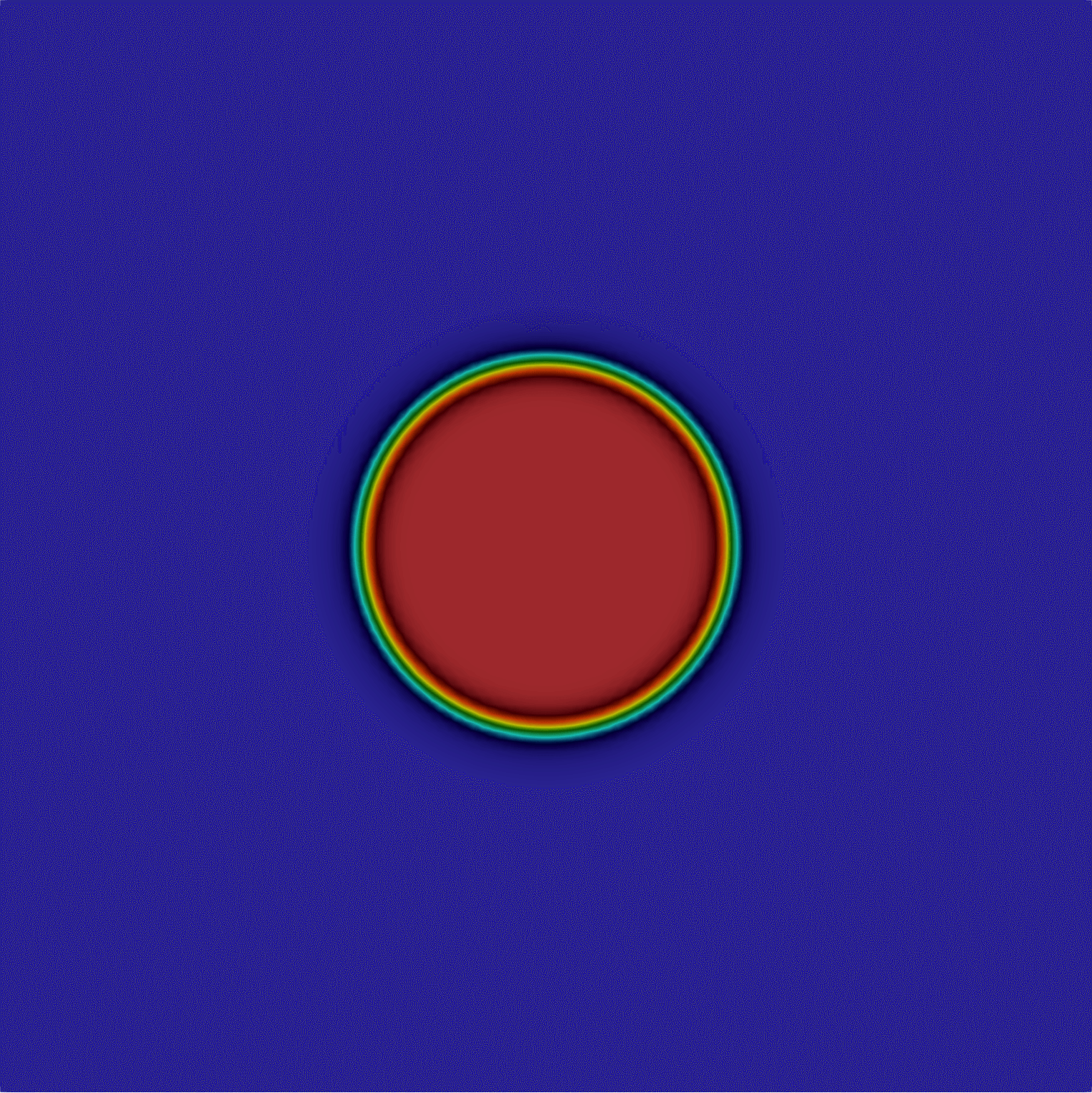}\\[.1cm]
\includegraphics[width=.35\textwidth]{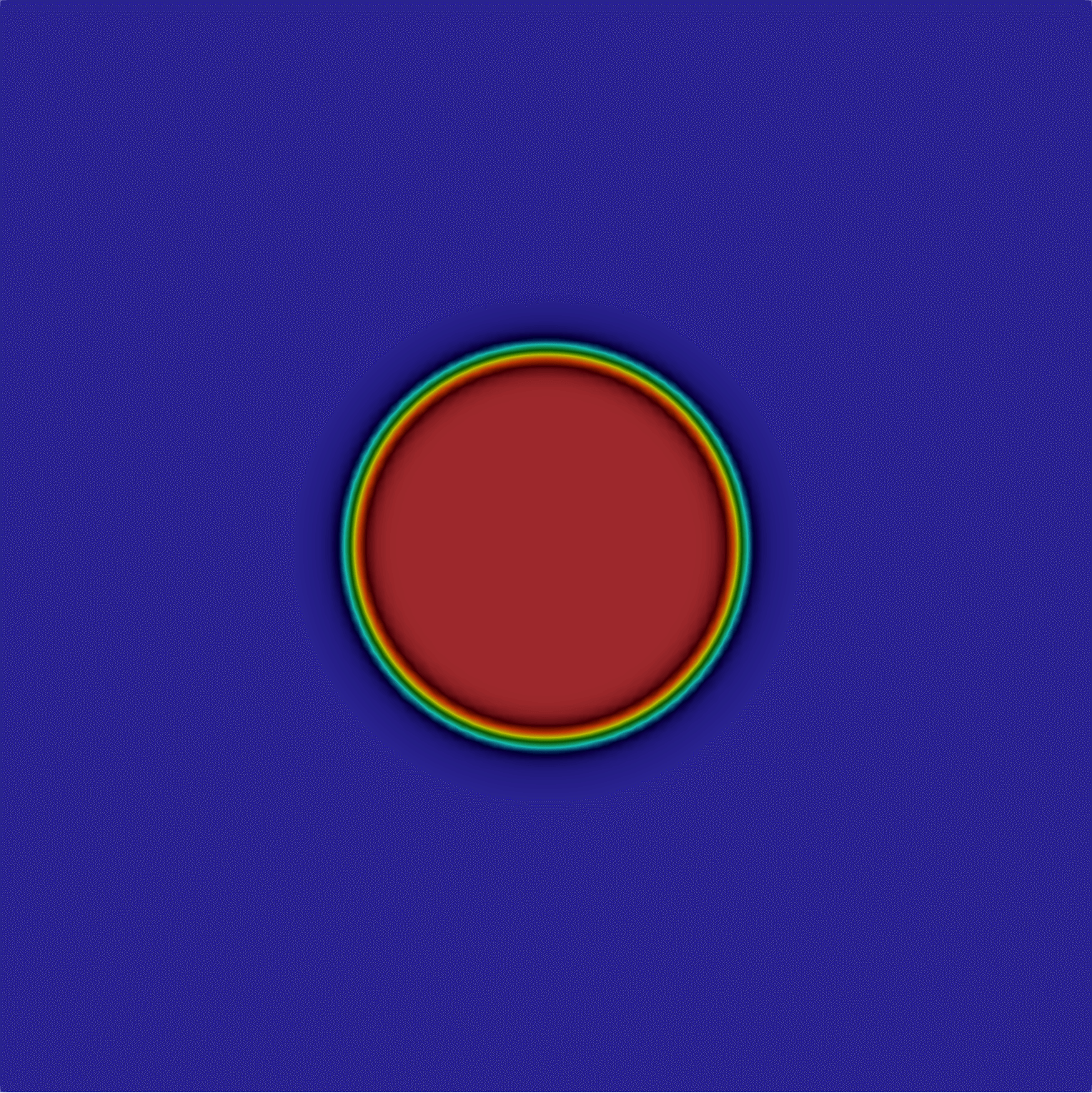} \includegraphics[width=.35\textwidth]{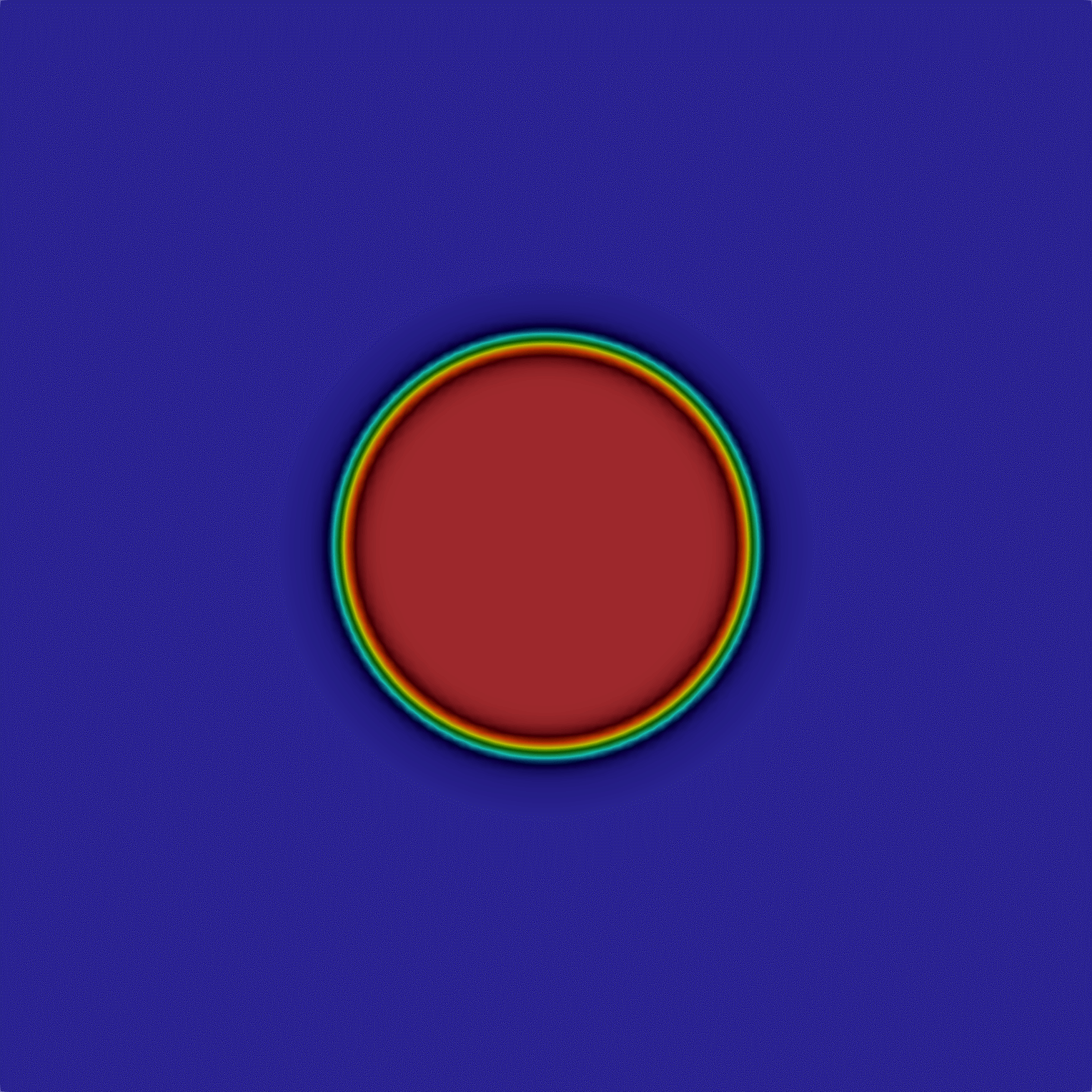}
\caption{Experiment \ref{exp:tumor} for the CHL system: Snapshots of the phase field $\phi$ at times $t\in\{0,0.5,0.75,1\}$ with mesh size $h_{\max} \approx 10^{-2}$ and time-step $\tau=10^{-3}$ (from top left to bottom right). \label{pic:tumorCHL}}
\end{figure}

\begin{figure}[htbp!]
\centering
\begin{tikzpicture}
    \draw (0.03, 0) node[inner sep=0] {\includegraphics[width=.79\textwidth]{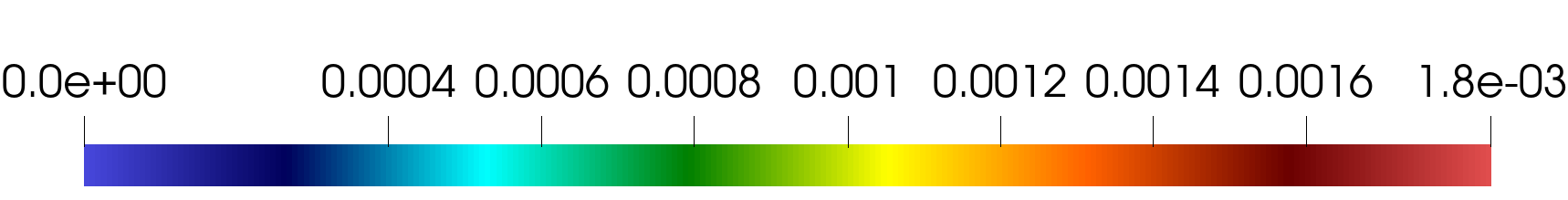}};
\end{tikzpicture} \\[.1cm]
\includegraphics[width=.35\textwidth]{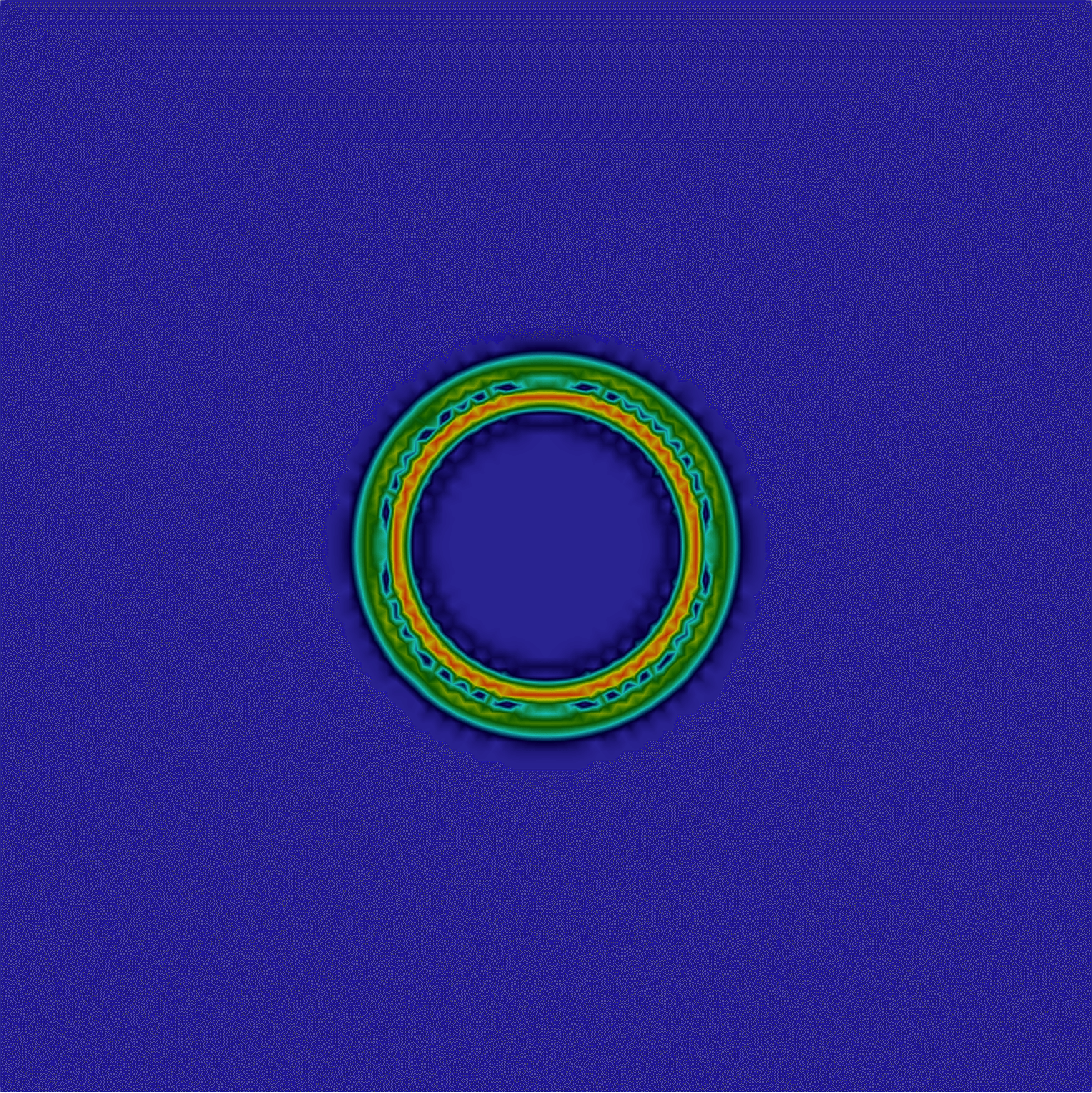}
\includegraphics[width=.35\textwidth]{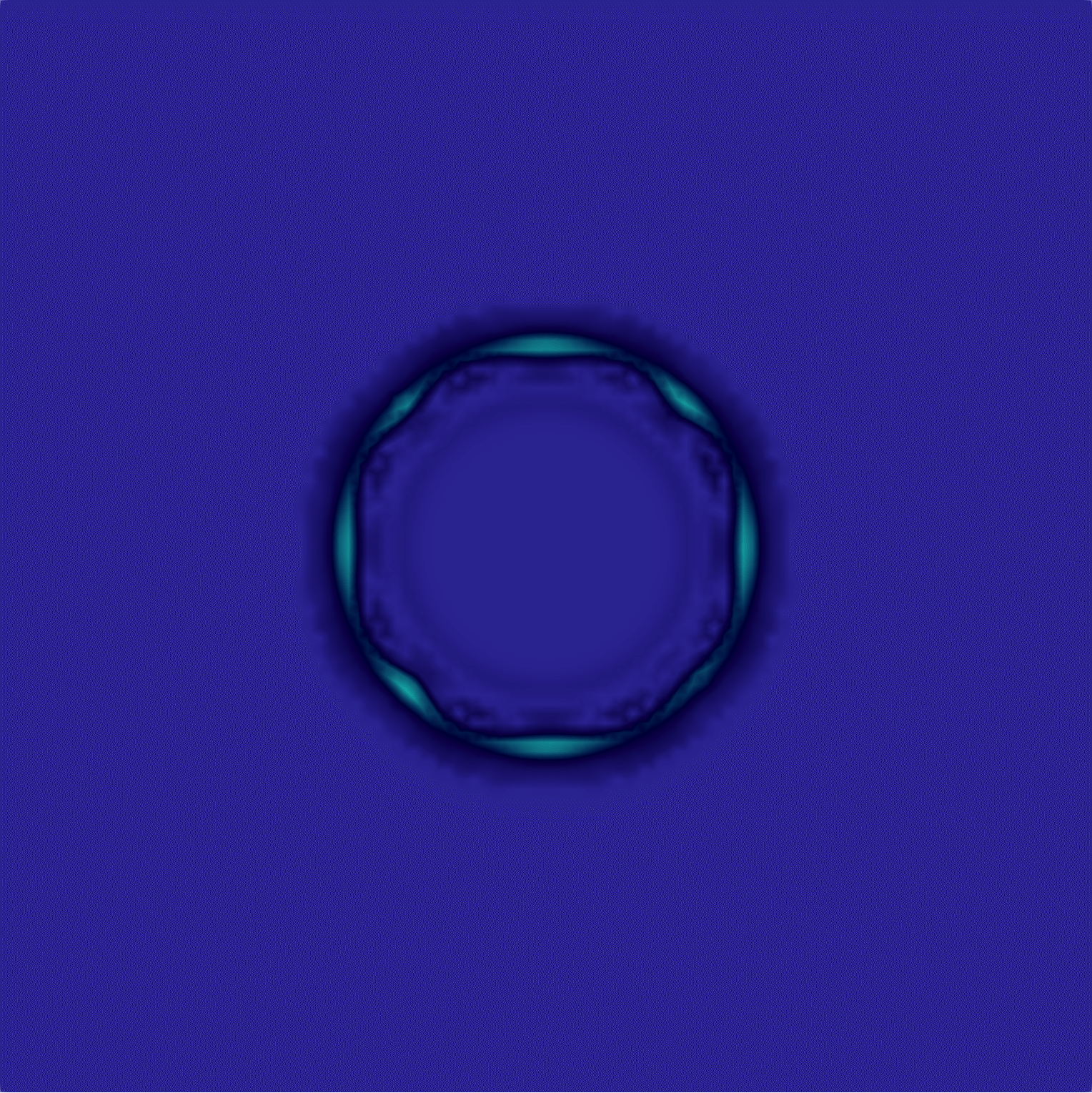}\\[.1cm]
\includegraphics[width=.35\textwidth]{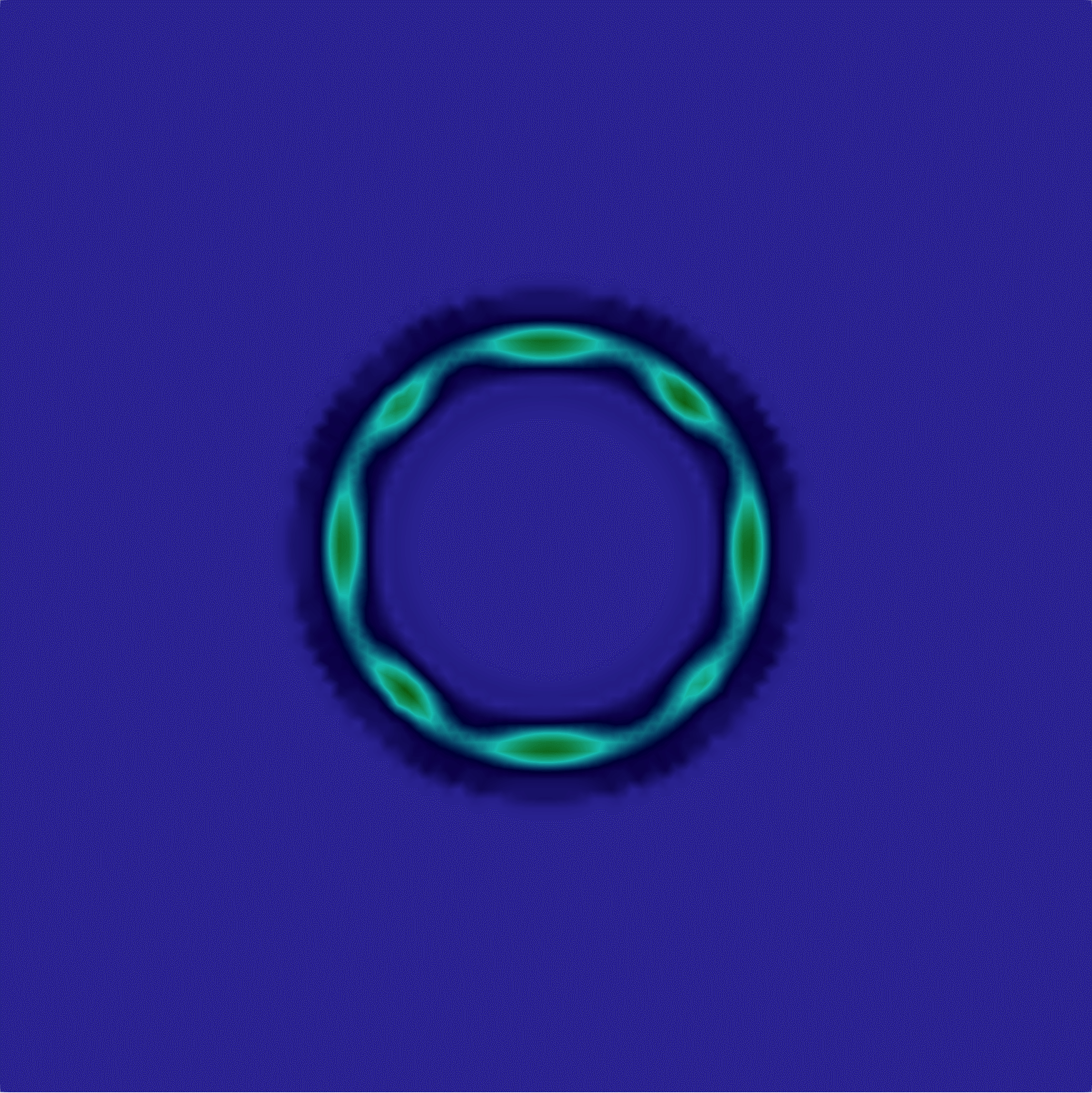}
\includegraphics[width=.35\textwidth]{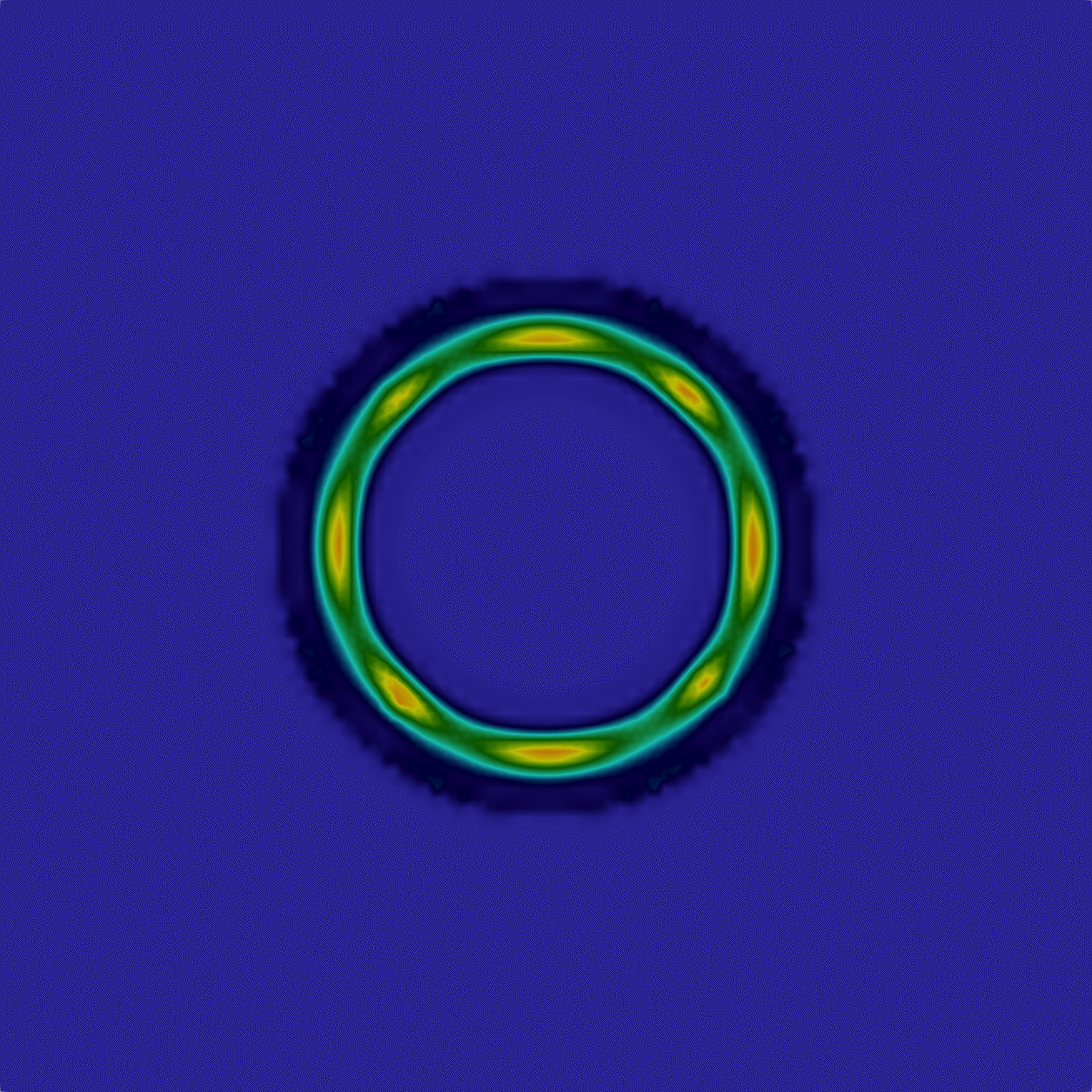}
\caption{Experiment \ref{exp:tumor}: Difference between the phase-fields of the CHB and CHL solutions in the times $t\in\{0.01,0.5,0.75,1\}$ (from top left to bottom right). \label{pic:tumorDiff}}
\end{figure}

The final example combines both experiments, that is, we consider tumour growth using 3 initial tumours in three space dimensions by choosing the domain $\Omega=(0,1)^3$.
\begin{experiment}
We consider the following set of initial data
\begin{align*}
 \phi_0 &= 2 - \tanh\left(\frac{\mathcal{B}(0.3,0.3,0.3,0.15)}{0.01}\right) - \tanh\left(\frac{\mathcal{B}(0.3,0.7,0.3,0.15)}{0.01}\right) \\
 &\phantom{= 2}- \tanh\left(\frac{\mathcal{B}(0.7,0.3,0.3,0.15)}{0.01}\right), \qquad\u_0=\mathbf{0}, \qquad \theta_0=0\\ 
 \mathcal{B}(x_0,y_0,z_0,r)&:= (x-x_0)^2+ (y-y_0)^2+ (z-z_0)^2 - r^2.    
\end{align*}
Furthermore, we choose $r(\phi)=\frac{5}{2}(1-\phi^2)$, $s=\mathbf{f}=0$ and $m(\phi)=10^{-14} + \frac{1}{16}(\phi^2-1)^2.$  Additionally the following parameters are changed in comparison to Table \ref{tab:1}
\begin{align*}
 \CC_{-1}=&\begin{pmatrix}  6 & 4 & 0 \\  4 & 6 & 0 \\  0 & 0 & 1\end{pmatrix}, \quad \CC_{1}=\begin{pmatrix}  1.55 & 0.38 & 0 \\  0.38 & 1.55 & 0 \\  0 & 0 & 0.58\end{pmatrix}, \quad \mathbb{C}_{\nu} = 10^{-2}\CC_{1}, \quad \mathcal{T}= \frac{1}{2}\zeta(\phi+1)\I,
\end{align*}
\end{experiment}

The simulation results are shown in Figure \ref{pic:3d}. The three initial bubbles start to grow over time and start to connect around $t\approx 0.8$. Afterwards, the agglomeration process starts, which can be seen at $t\approx 1$. Note that, as discussed in Experiment \ref{exp:lshape}, due to the different choice of the eigenstrain $\mathcal{T}$ we do not observe the L shape in this simulation. Instead, the tumour keeps its circular shape.

\begin{figure}[htbp!]
\centering
\includegraphics[width=.4\textwidth]{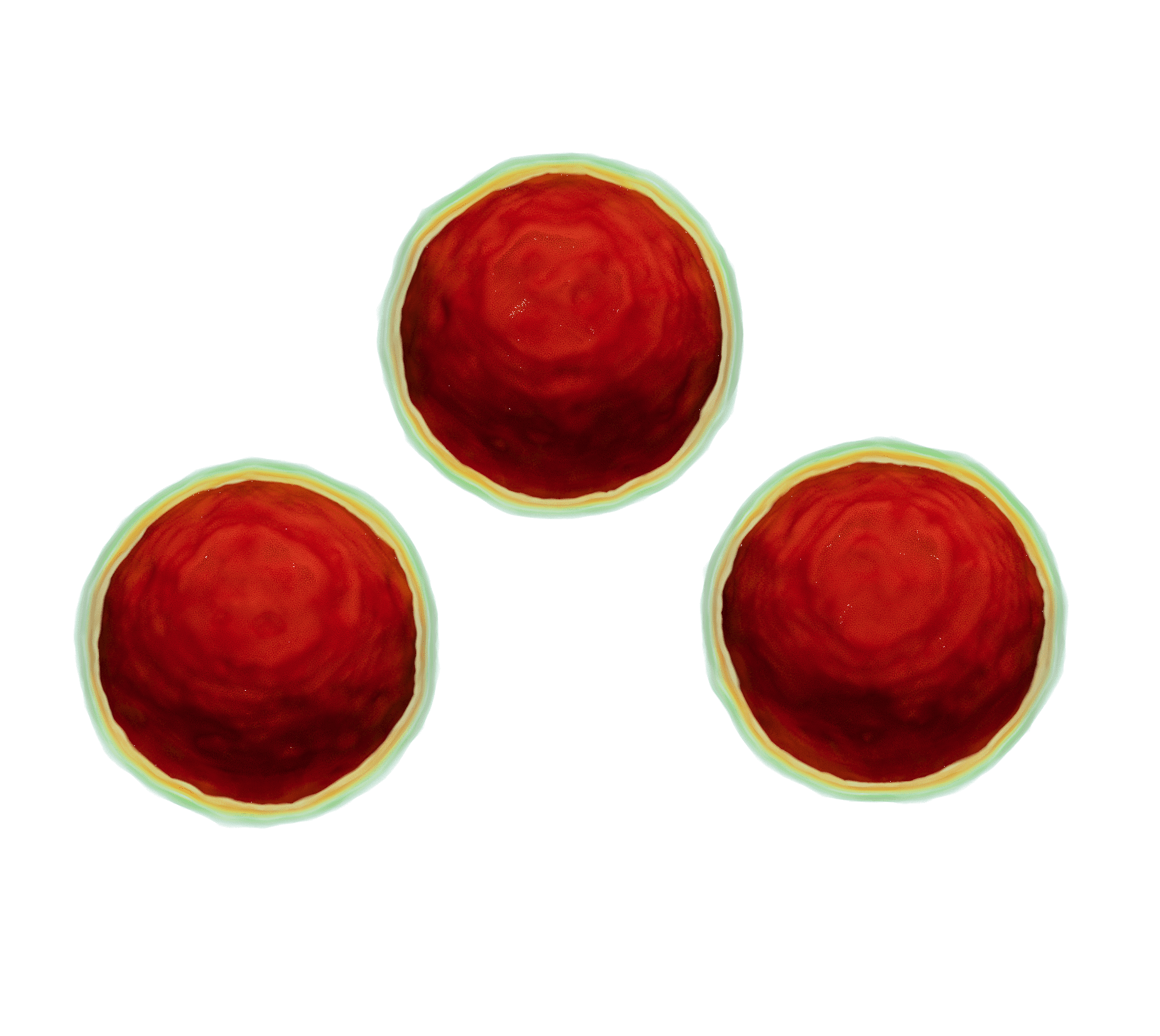} \qquad
\includegraphics[width=.4\textwidth]{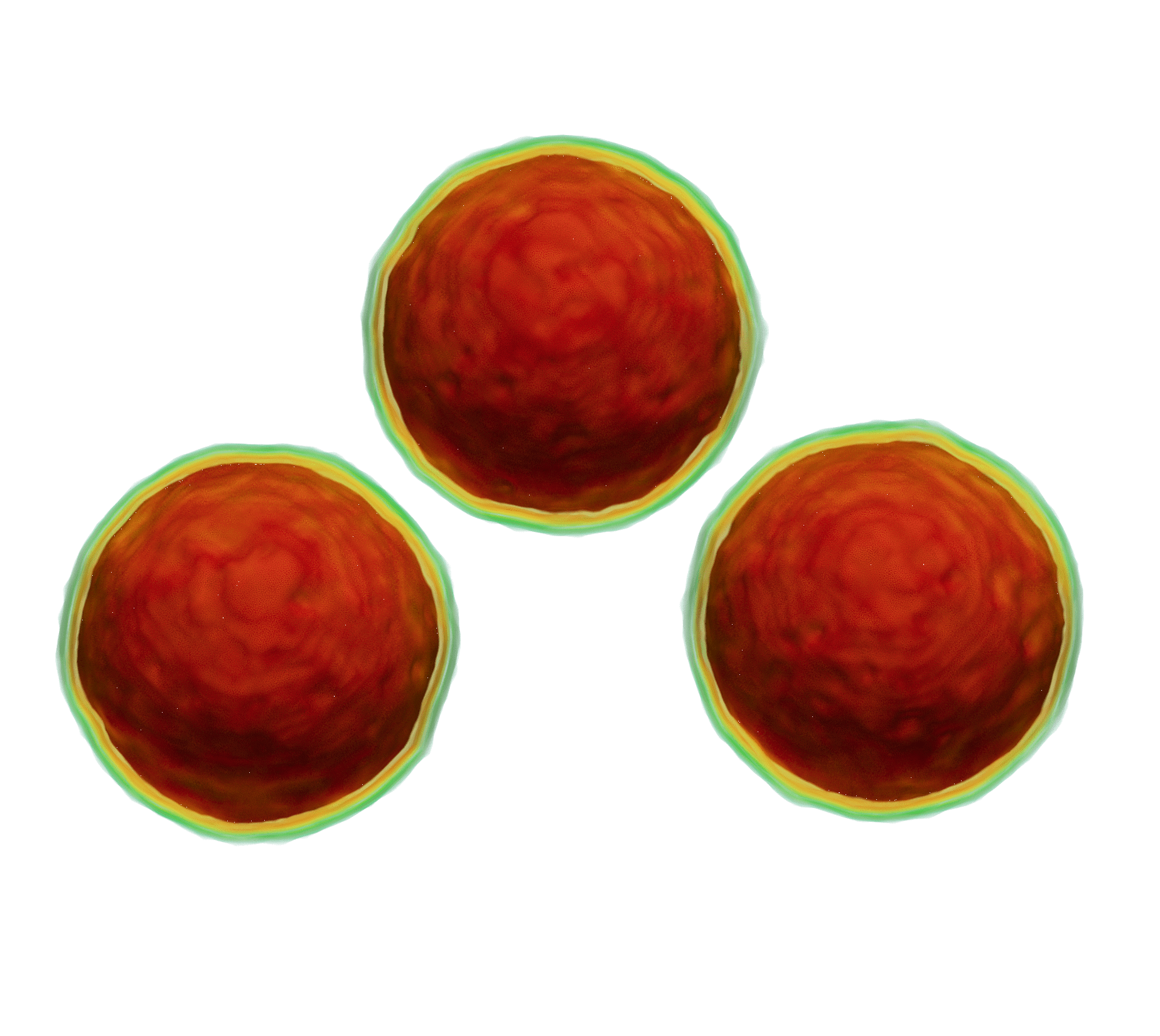} \\
\includegraphics[width=.4\textwidth]{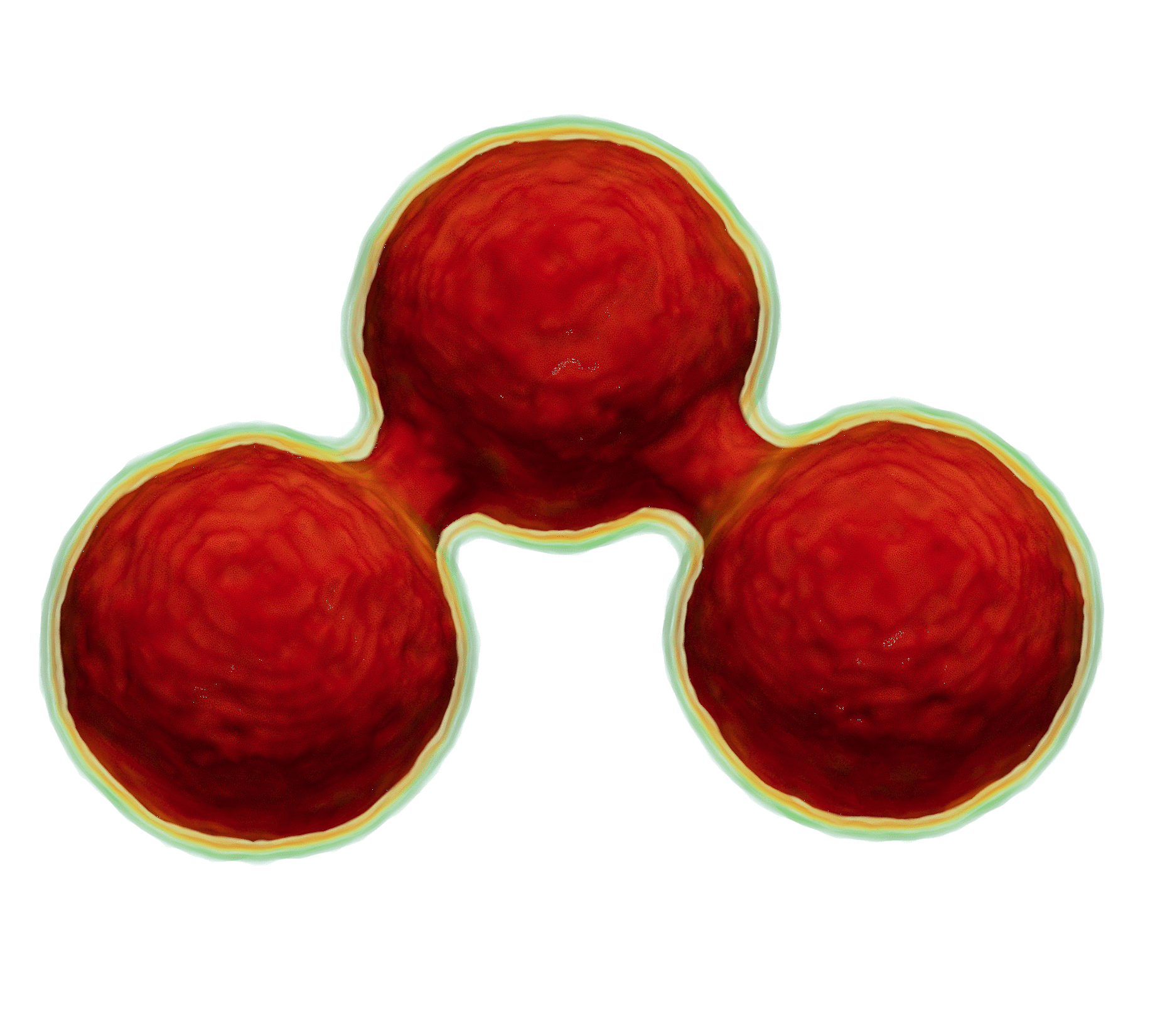} \qquad
\includegraphics[width=.4\textwidth]{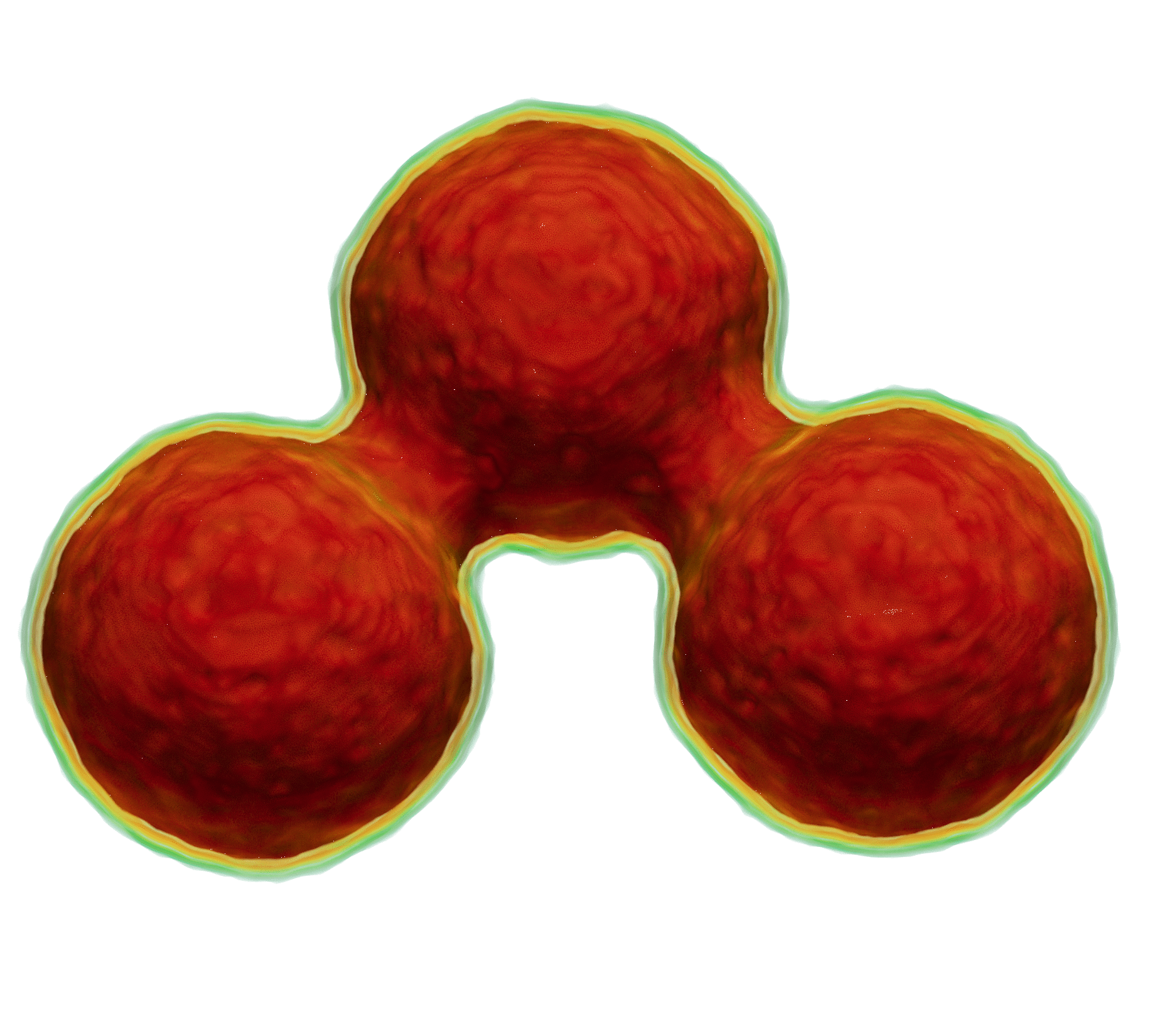}
\caption{Snapshots of the phase field $\phi$ at the times $t\in\{0.2,0.5,0.8,1\}$  with mesh size $h_{\max} \approx 2\cdot 10^{-2}$ and time-step $\tau=5\cdot 10^{-4}$. Visualisation of the tumour core ($\phi\geq 0.9$) in red and the isosurfaces $\phi=-0.25$ (green) and $\phi=0.15$ (yellow) (from top left to bottom right). \label{pic:3d}}
\end{figure}

\section{Conclusion and Outlook} \label{sec:outlook}

In this paper, we proposed a structure-preserving splitting scheme for the Cahn-Hilliard-Biot system using standard finite elements in space and a problem-adapted implicit-explicit Euler time-integration method. We have shown that the method under typical assumptions has discrete solutions, which preserve the thermodynamic structure, that is, the balance of mass, volumetric fluid content, and energy. Furthermore, under more restrictive assumptions, the uniqueness of a discrete solution is guaranteed. The theoretical results are accompanied by numerical tests, i.e. a convergence test. In addition, several test scenarios with application to tumour growth are considered and illustrated.

\section*{Acknowledgement}
A.B.~gratefully acknowledges the support of the German Science Foundation (DFG) via TRR~146 (project~C3) and SPP2256 Project Number 441153493. M.F.~is supported by the State of Upper Austria. 

\def\bibfont{\large}
\bibliography{lit.bib}

\end{document}